\newtheorem*{dense-cor}{Theorem B}
\newtheorem*{code-thm}{Theorem C}
\newtheorem{prop}{Proposition}[section]
\newtheorem{lemma}[prop]{Lemma}
\newtheorem{thm}[prop]{Theorem}
\newtheorem{cor}[prop]{Corollary}
\theoremstyle{definition}
\newtheorem{definition}[prop]{Definition}
\theoremstyle{remark}
\newtheorem{remark}[prop]{Remark}
\numberwithin{equation}{section}
\begin{document}

\author{Hiroki Takahasi}

\address{Keio Institute of Pure and Applied Sciences (KiPAS), Department of Mathematics,
Keio University, Yokohama,
223-8522, JAPAN} 
\email{hiroki@math.keio.ac.jp}

\subjclass[2020]
{37B10, 37D35}
\thanks{{\it Keywords}: recurrence; subshift; piecewise monotonic map; specification; Hausdorff dimension}

\thanks{}

\date{}

 \title[The Recurrence spectrum beyond specification]
{The Recurrence spectrum for dynamical systems\\ 
beyond specification}

 \maketitle

\begin{abstract}
We introduce {\it (W')-specification} 
in terms of language decompositions of subshifts, 
and show that any recurrence set 
of a subshift with this property has full Hausdorff dimension. 
Our main result applies to a wide class of subshifts  without specification, 
such as all $S$-gap shifts, some coded shifts, and the coding space of any transitive piecewise monotonic interval map with positive entropy.
Further, for a wide class of piecewise expanding interval maps 
we show that any recurrence set has full Hausdorff dimension.
\end{abstract}

 \section{Introduction}

 An important concept in dynamics is that of recurrence. In order to analyze the recurrence of a given initial point, a natural approach is to consider the growth rate of the first return times of its orbit into smaller and smaller sets that contain the initial point.   For a measurable map $T\colon X\to X$,
the first return time of a point $x\in X$ into a set $A\subset X$ is defined by \[\tau_A(x)=\inf\{n\in\mathbb N\colon T^n(x)\in A\}.\]
Let
$\xi$ be a finite partition of $X$ into measurable sets. For each $n\in\mathbb N$  let $\xi_n=\{B_1\cap T^{-1}(B_2)\cdots \cap T^{-n+1}(B_{n})\colon B_i\in\xi\text{ for } i=1,\ldots,n \}$. 
Let $\xi_n(x)$ denote the element of $\xi_n$  
that contains $x$. 
Ornstein and Weiss \cite{OW93} proved that for any $T$-invariant ergodic probability measure $\mu$, we have
\[\lim_{n\to\infty}\frac{\log\tau_{\xi_n(x)}(x)}{n}=h_\mu(T,\xi)\ \text{ $\mu$-a.e.,} \]
where $h_\mu(T,\xi)$ denotes the entropy of $(T,\mu)$ with respect to the partition $\xi$.
The measure-theoretic entropy $h_\mu(T)$ of $\mu$ with respect to $T$ is given by $h_\mu(T)=\sup_\xi h_\mu(T,\xi)$, where the supremum ranges over all finite partitions of $X$ into measurable sets. If $\xi$ is a generator then $h_\mu(T,\xi)=h_\mu(T)$ holds.
For related results on recurrence in differentiable dynamical systems, see e.g., \cite{BS01,S06,STV02}.

In order to analyze a wide variety of recurrence that is not captured by a single ergodic measure, 
for $a$, $b\in[0,\infty]$ with $a\leq b$ we consider {\it a recurrence set} 
\[R_{T,\xi}(a,b)=\left\{x\in X\colon\liminf_{n\to\infty}\frac{\log\tau_{\xi_n(x)}(x)}{n}=a\ \text{ and } \ \limsup_{n\to\infty}\frac{\log\tau_{\xi_n(x)}(x)}{n}=b\right\}.\]
If $X$ is a metric space, then let $\dim_{\rm H}$ denote the Hausdorff dimension on $X$,
and define {\it a recurrence spectrum}
 \[(a,b)\mapsto \dim_{\rm H}R_{T,\xi}(a,b).\]
 This spectrum was first investigated by Feng and Wu \cite{FW01}  for the one-sided full shift. Interestingly, they
 proved that any recurrence set with respect to the partition into $1$-cylinders has full Hausdorff dimension. 
Later on,
some natural variants of the recurrence set were considered and 
analogous statements were established.
For conformal repellers, Saussol and Wu \cite{SW03} considered the first return time into a ball of radius $r>0$ instead of cylinders, normalized by $-\log r$ instead of $n$.
Independently from \cite{SW03} and with different techniques,  Olsen \cite{Ols04} obtained stronger results for some conformal iterated function systems. 
  Kim and Li \cite{KL16} replaced the normalizing coefficients $n$ by an increasing function $\varphi(n)$ of $n$, and established a zero-one law for the Hausdorff dimension of the recurrence sets of one-sided full shifts.

The aim of this paper is to
 extend Feng and Wu's result \cite{FW01} to a wide class of subshifts. 
 For some dynamical systems 
 with specification (see section~\ref{sp-sec} for various notions of specification), Lau and Shu \cite{LS08} considered recurrence sets and an associated spectrum replacing cylinders by Bowen balls and Hausdorff dimension by topological entropy. However, the notion of specification used in \cite{LS08} is  strong. 
 Inspired by the works of 
 Climenhaga and Thompson \cite{CT12,CT13}, 
in this paper we introduce {\it (W')-specification}, and show that any recurrence set of a subshift with (W')-specification has full Hausdorff dimension (theorem~\ref{mainthm}). We also establish an analogous statement for a wide class of interval maps (theorem~\ref{monotone-thm}).

 \subsection{Recurrence spectrum for subshifts}
 Throughout this paper 
 we assume $m\geq2$ is a fixed integer.
 Let $\Sigma_m$ denote the 
 full shift on $m$ symbols, namely 
 $\Sigma_m=\{0,\ldots,m-1\}^{\mathbb N}$ in the one-sided case and $\Sigma_m=\{0,\ldots,m-1\}^{\mathbb Z}$ in the two-sided case.
We introduce a metric $d$ on $\Sigma_m$ by
\begin{equation}\label{metric-d}d(x,y)=\exp(-\min\{|i|\colon x_i\neq y_i\} ),\end{equation}
for two distinct elements $x=(x_i)_{i\in\mathbb G}$, $y=(y_i)_{i\in\mathbb G}$,
where 
$\mathbb G$ is $\mathbb N$ or $\mathbb Z$ according as $\Sigma_m$ is one-sided or two-sided.
Let $\sigma$ denote the left shift  acting on $\Sigma_m$: $(\sigma^kx)_i=x_{i+k}$ for $k\in\mathbb N\cup\{0\}$. A $\sigma$-invariant closed subset of $\Sigma_m$ is called {\it a subshift}.

We introduce an {\it empty word} $\emptyset$ by the rules
  $\emptyset w=w\emptyset=w$ for any word $w$ from $\{0,\ldots,m-1\}$ 
  and $\emptyset\emptyset=\emptyset$.
  Let $|w|$ denote 
  the word length of a word $w$, 
  and set the word length of the empty word to be $0$. 
   {\it A language} of a subshift $\Sigma$,
denoted by $\mathcal L(\Sigma)$, is the collection of the empty word and all words 
that appear in some elements of $\Sigma$. 
For each $n\in\mathbb N\cup\{0\}$, let $\mathcal L_n(\Sigma)$ denote the collection of elements of $\mathcal L(\Sigma)$ with word length $n$.  
We set 
$\#\mathcal L_0(\Sigma)=1$ for convenience.   
For $w_1\cdots w_n\in\mathcal L_n(\Sigma)$, 
 define {\it an $n$-cylinder}
\[[w_1\cdots w_n]=
\{x\in\Sigma\colon x_i=w_{i}\text{ for }i=1,\ldots,n\}.\]
We will consider the recurrence sets in $\Sigma$ with respect to the natural partition 
 \[\xi=\{[0],\ldots,[m-1]\}.\]
 Note that $\xi$ is a generator. 
 If $\Sigma$ is one-sided, then
for $x\in\Sigma$ and $n\in\mathbb N$,
 $\tau_{\xi_{n}(x)}(x)$ is the smallest $k\in\mathbb N$ such that $\sigma^kx$ enters the ball of radius $e^{-n-1}$ about $x$
 with respect to the metric in \eqref{metric-d}.

  For a subshift $\Sigma$,
  we consider subsets $\mathcal C^{\rm p}$, $\mathcal G$, $\mathcal C^{\rm s}$ of $\mathcal L(\Sigma)$ such that
$\mathcal L(\Sigma)=\mathcal C^{\rm p}\mathcal G\mathcal C^{\rm s}$. This means that every word in $\mathcal L(\Sigma)$
can be written as a concatenation of 
elements of $\mathcal C^{\rm p}$, $\mathcal G$, $\mathcal C^{\rm s}$. For a subset $\mathcal E$ of $\mathcal L(\Sigma)$ and $n\in\mathbb N\cup\{0\}$, let $\mathcal E_n=\mathcal E\cap\mathcal L_n(\Sigma)$ and put
\[h(\mathcal E )=\limsup_{n\to\infty}\frac{\log \#\mathcal E_n}{n}.\] 
Let $h_{\rm top}(\Sigma)$ denote the topological entropy of $\Sigma$,
namely $h_{\rm top}(\Sigma)=h(\mathcal L(\Sigma))$.

\begin{thm}\label{mainthm}
Let $\Sigma$ be a subshift that
has a language decomposition $\mathcal L(\Sigma)=\mathcal C^{\rm p}\mathcal G\mathcal C^{\rm s}$ such that $\mathcal G$ has (W')-specification and $h(\mathcal C^{\rm p}\cup\mathcal C^{\rm s})<h_{\rm top}(\Sigma)$.
For all $a,b\in[0,\infty]$ with $a\leq b$, we have
\[\dim_{\rm H}R_{\sigma,\xi}(a,b)=\dim_{\rm H}\Sigma.\]
\end{thm}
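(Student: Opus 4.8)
The plan is to prove the lower bound, since the reverse inequality $\dim_{\rm H}R_{\sigma,\xi}(a,b)\le\dim_{\rm H}\Sigma$ is immediate from $R_{\sigma,\xi}(a,b)\subseteq\Sigma$ and monotonicity of Hausdorff dimension. With the metric \eqref{metric-d} one has $\dim_{\rm H}\Sigma=h_{\rm top}(\Sigma)$ in the one-sided case, which is the case I would treat in detail; the two-sided case (where $\dim_{\rm H}\Sigma=2h_{\rm top}(\Sigma)$) follows by a product argument, because the first return times involve only forward iterates $\sigma^k$, so the negative coordinates remain entirely free and contribute an independent full-dimension factor. The first preparatory step is to extract the entropy from the decomposition: a routine counting of concatenations $\#\mathcal L_n(\Sigma)\le\sum_{i+j+l=n}\#\mathcal C^{\rm p}_i\,\#\mathcal G_j\,\#\mathcal C^{\rm s}_l$ together with $h(\mathcal C^{\rm p}\cup\mathcal C^{\rm s})<h_{\rm top}(\Sigma)$ forces $h(\mathcal G)=h_{\rm top}(\Sigma)$. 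Hence for each $\varepsilon>0$ there is an arbitrarily large $N$ and a finite set $\mathcal D\subseteq\mathcal G_N$ with $\#\mathcal D\ge e^{N(h_{\rm top}(\Sigma)-\varepsilon)}$, and by (W')-specification any concatenation of words of $\mathcal D$, glued by connecting words of bounded length, lies in $\mathcal L(\Sigma)$. This is the source of all the freedom used below.

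For fixed $\varepsilon$ I would build a Moran-type subset $F\subseteq R_{\sigma,\xi}(a,b)$ carrying a measure of dimension at least $h_{\rm top}(\Sigma)-O(\varepsilon)$, and then let $\varepsilon\to0$. Points of $F$ are read off in long \emph{free zones}, where one freely concatenates words of $\mathcal D$, separated by sparse \emph{control zones} that pin down the recurrence exponents. I would exploit the monotonicity $\tau_{\xi_n(x)}(x)\le\tau_{\xi_{n+1}(x)}(x)$, valid because matching a longer prefix is more restrictive. Fixing increasing scales $L_1<L_2<\cdots$ and return distances $P_1<P_2<\cdots$, once the prefix $x_1\cdots x_{L_k}$ is determined I force a recurrence by copying it verbatim at position $P_k$, i.e. $x_{P_k+i}=x_i$ for $i\le L_k$, which is an admissible word by specification. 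Provided no \emph{earlier} recurrence occurs, for $L_{k-1}<n\le L_k$ the first return is exactly $P_k$, so $\frac{\log\tau_{\xi_n(x)}(x)}{n}=\frac{\log P_k}{n}$ decreases from about $\frac{\log P_k}{L_{k-1}}$ to $\frac{\log P_k}{L_k}$ as $n$ runs through the block. Tuning the parameters so that $\frac{\log P_k}{L_k}\to a$ and $\frac{\log P_k}{L_{k-1}}\to b$ (e.g. $\log P_k\sim aL_k\sim bL_{k-1}$, which fixes the growth rate $L_k/L_{k-1}$) then yields $\liminf_n=a$ and $\limsup_n=b$; the degenerate values $a,b\in\{0,\infty\}$ and the case $a=b$ are covered by the corresponding limiting parameter choices.

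The hard part, and the main obstacle, will be to guarantee the \emph{absence of premature recurrence}, so that within each block the first return is genuinely $P_k$ and the liminf does not dip below $a$. Concretely, for $L_{k-1}<n\le L_k$ the $n$-prefix begins with the fixed word $v_k:=x_1\cdots x_{L_{k-1}+1}$, so it suffices to arrange that $v_k$ reappears in $[1,P_k)$ only at the forced copy. Forbidding a single word of length $L_{k-1}+1$ discards only a fraction $\approx e^{-L_{k-1}(h_{\rm top}(\Sigma)-\varepsilon)}$ of the choices at each concatenation step, so the free zones can be filled while avoiding $v_k$ and still retaining entropy $h_{\rm top}(\Sigma)-O(\varepsilon)$; making the $L_k$-prefix primitive rules out self-overlapping returns. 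This is exactly the step that would fail for a bare subshift without free symbols, and it is made possible here by the richness of $\mathcal D$ together with (W')-specification, which furnish enough admissible continuations to realize the avoidance and the forced copies at once. Since each control zone has length $O(L_k)$ while the preceding free zone has length $\asymp P_k$, the control zones have vanishing density and do not affect the dimension. I expect essentially all the delicate bookkeeping to live here, in quantifying premature recurrence uniformly over all scales $n$ rather than only over the scales $L_k$.

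Finally I would place on $F$ the natural measure $\mu$ distributing mass uniformly over the independent choices of $\mathcal D$-words in the free zones. A standard estimate bounds the $\mu$-measure of a ball of radius $e^{-n}$ by $e^{-n(h_{\rm top}(\Sigma)-O(\varepsilon))}$, up to the negligible correction from the control zones, so the mass distribution principle gives $\dim_{\rm H}F\ge h_{\rm top}(\Sigma)-O(\varepsilon)$. Letting $\varepsilon\to0$ and combining with the trivial upper bound yields $\dim_{\rm H}R_{\sigma,\xi}(a,b)=\dim_{\rm H}\Sigma$.
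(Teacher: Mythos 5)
Your overall architecture --- reduce to the lower bound, fill long free zones with concatenations of high-entropy words extracted from $\mathcal G$, insert forced copies of the prefix at return positions $P_k$ tuned so that $\log P_k\sim aL_k\sim bL_{k-1}$, and apply the mass distribution principle to the resulting Moran set --- is the paper's. The gap sits in the step you yourself identify as the hard one: preventing premature recurrence. Your mechanism is to forbid the growing word $v_k=x_1\cdots x_{L_{k-1}+1}$ throughout a free zone of length $\approx P_k\sim e^{bL_{k-1}}$, justified by the claim that this ``discards only a fraction $\approx e^{-L_{k-1}(h_{\rm top}(\Sigma)-\varepsilon)}$ of the choices at each concatenation step.'' That accounting does not hold up. An occurrence of $v_k$ straddles roughly $L_{k-1}/N$ consecutive blocks \emph{together with the connecting words}, which are not free choices (they are handed to you by (W')-specification with no control beyond $|w|\le t$); and at a step where the word built so far already ends in a long prefix of $v_k$, the set of next blocks that must be discarded is cut out by a condition on a short prefix of the block, which can be a constant fraction of $\mathcal D$ rather than an exponentially small one. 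Making the avoidance work then hinges on such steps being rare, i.e.\ on self-overlap properties of the prefix that you assert (``primitive'') but cannot simply impose, since the prefix is an output of the construction and must also lie in $\mathcal L(\Sigma)$. This is exactly where Feng--Wu's full-shift argument does not port over directly. The paper replaces it by a different idea: Proposition~\ref{nov***} fixes once and for all a single marker word $v^*\in\mathcal G_k\setminus\mathcal Q_k$ of block length $k$ and prunes $\mathcal Q_k$ (via Shannon--McMillan--Breiman and the prefix-closure of the extracted word sets) so that $v^*$ is never a subword of $uwv$ for any $u,v\in\mathcal Q_k$ and any $w$ with $|w|\le t$. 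Every constructed point begins with $v^*$, so a long prefix can recur only where $v^*$ is deliberately re-inserted; no scale-dependent avoidance is needed at all.

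Two smaller omissions. First, you cannot place the copied prefix ``verbatim at position $P_k$'': (W')-specification only glues the already-built word (itself a concatenation, which is precisely why (W)-specification would not suffice) to the copy up to a connecting word of length $\le t$, so the return position is controlled only up to bounded error, which must be tracked as in Lemma~\ref{tau-exp}. Second, you arrange that the copy does not occur too early but not that it does not accidentally extend too far: if the symbols after the copied block keep matching $x_{L_k+1}x_{L_k+2}\cdots$ for a long stretch, then $\tau_{\xi_n}=P_k$ persists for $n$ well beyond $L_k$, the ratio at the next jump falls strictly below $b$, and the $\limsup$ is lost. The paper handles this by following the copy with a word $\lambda^p(x)\in\mathcal Q_k$ chosen so that the copy cannot continue as a prefix of the original for more than $k+t$ extra symbols.
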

We say a subshift
 $\Sigma$ has {\it the specification property} if there is an integer $t\geq0$ such that for all $u,v\in\mathcal L(\Sigma)$ there is $w\in\mathcal L(\Sigma)$ such that $uwv\in\mathcal L(\Sigma)$ and $|w|= t$.  
This property 
roughly means that one can glue together a collection of orbit segments to form one orbit.
Points in the recurrence sets with prescribed properties may be constructed inductively, by replicating initial pieces of orbit segments which are constructed in previous steps. Hence, the specification property plays an important role in the analysis of recurrence sets.

The equality in theorem~\ref{mainthm} is known for the full shift \cite{FW01}, and for subshifts with specification \cite{LS08} such as transitive subshifts of finite type and sofic shifts \cite{LM}. 
For subshifts without specification, it was known only for the beta shifts \cite{Par60,Sch97} 
as a consequence of the result of Ban and Liu
\cite[theorem~1.2]{Ban}. The argument in \cite{Ban} relies on a special approximation property within the parametrized family of beta shifts that cannot be generalized much. 

The definition of (W')-specification in theorem~\ref{mainthm} is given in section~\ref{sp-sec}. This property is 
a variant of
(W)-specification introduced by Climenhaga and Thompson \cite{CT12,CT13} for the purpose of developing an ergodic theory for dynamical systems beyond specification.
As in lemma~\ref{w-implication}, 
(W')-specification implies (W)-specification.
Many of the presently known subshifts that satisfy (W)-specification also satisfy (W')-specification.
Below is a list of three classes of subshifts that satisfy the assumption of theorem~\ref{mainthm}.

\subsubsection{Coding spaces of interval maps}
We say $T\colon [0,1]\to [0,1]$
is  {\it a piecewise monotonic map} if
there exist pairwise disjoint,
finitely many non-degenerate 
subintervals $I_0,\ldots,I_{m-1}$ of $[0,1]$ 
such that $\bigcup_{j=0}^{m-1} I_j=[0,1]$, and the restriction
$T|_{{\rm int}(I_j)}$
is strictly monotone and continuous for every $0\leq j\leq m-1$.

Let $T$ be a piecewise monotonic map and let \[\xi_T=\{I_0,\ldots,I_{m-1}\}.\] 
By following orbits of $T$ over the  partition $\xi_T$, one can code points in $[0,1]$ into symbol sequences in a subshift in $\Sigma_m$. This subshift is denoted by $\Sigma_T$, and called {\it the coding space} of $T$ (see section~\ref{Markov-sec}).
We say $T$ is {\it transitive} if there is $x\in [0,1]$ such that $\{T^n(x)\colon n\ge 0\}$ is dense in $[0,1]$. 
If $T$ is  transitive then $\xi_T$ is a genearator. If moreover $T$ has positive topological entropy, then
for any $\varepsilon>0$ there is a decomposition
$\mathcal L(\Sigma_T)=\mathcal C^{\rm p}\mathcal G\mathcal C^{\rm s}$ such that $\mathcal G$ has  (W')-specification and $h(\mathcal C^{\rm p}\cup\mathcal C^{\rm s})<\varepsilon$ (see proposition~\ref{CT-dec}). 
The coding space of any transitive piecewise monotonic map with positive topological entropy satisfies the assumption of Theorem~\ref{mainthm}.

\subsubsection{Coded shifts}
Coded shifts \cite{CT12,KSW23,LM} are two-sided shifts that may satisfy the assumption of Theorem~\ref{mainthm}. 
Well-studied coded shifts are $S$-gap shifts \cite{LM} and the Dyck shift \cite{Kri74}.
It can be read out from \cite[section~4]{CT12} that any coded shift $\Sigma$ has a natural language decomposition $\mathcal L(\Sigma)=\mathcal C^{\rm p}\mathcal G\mathcal C^{\rm s}$ such that $\mathcal G$ has (W')-specification with gap size $0$. Therefore, any coded shift $\Sigma$ with
$h(\mathcal C^{\rm p}\cup\mathcal C^{\rm s})<h_{\rm top}(\Sigma)$ satisfies the assumption of Theorem~\ref{mainthm}. This inequality holds for any $S$-gap shift, while it does not hold for
the Dyck shift.

\subsubsection{Subshifts in \cite[theorem~1.1]{C18}}\label{climen}
Let $v=v_1\cdots v_{|v|}$ and $w$ be words from $\{0,\ldots,m-1\}$ that are not the empty word. 
We say $w$  is  {\it a prefix} of $v$ if  
$w=v_{1}\cdots v_{|v|-i}$ for some $0\leq i\leq|v|-1$. 
We say $w$ is {\it a suffix} of $v$ if
$w=v_i\cdots v_{|v|}$ for some $1\leq i\leq|v|$.
For each $n\in\{1,\ldots,|v|\}$,  let $v|^{\rm p}_n$ and $v|^{\rm s}_n$ denote the prefix and suffix of $v$ of length $n$ respectively.
Let $\Sigma$ be a subshift for which
there are subsets $\mathcal C^{\rm p}$, $\mathcal G$, $\mathcal C^{\rm s}$ of $\mathcal L(\Sigma)$ such that:
\begin{itemize}
\item[(I)] there is $t\in\mathbb N$ such that for all $v$, $w\in\mathcal G\setminus\{\emptyset\}$
there is $u\in\mathcal L(\Sigma)$
with $|u|\leq t$ such that
$v|^{\rm s}_ku(w|^{\rm p}_n)\in\mathcal G$
whenever $k,n\in\mathbb N$ satisfy $1\leq k\leq |v|$, $1\leq n\leq|w|$ 
and $v|^{\rm s}_k\in\mathcal G$, $w|^{\rm p}_n\in\mathcal G$;

\item[(II)] 
$h(\mathcal C^{\rm p}\cup \mathcal C^{\rm s}\cup(\mathcal L(\Sigma)\setminus\mathcal C^{\rm p}\mathcal G\mathcal C^{\rm s}))<h_{\rm top}(\Sigma)$;
\item[(III)] there is $M\in\mathbb N$ such that
if $u,v,w\in\mathcal L(\Sigma)$ satisfy $|v|\geq M$, $uvw\in\mathcal L(\Sigma)$, $uv$, $vw\in\mathcal G$, then $v$, $uvw\in\mathcal G$. 
\end{itemize}
Under this assumption, Climenhaga \cite[theorem~1.1]{C18} obtained
a collection of results on the existence and uniqueness of equilibrium states, and  statistical properties of the equilibrium states.
Without loss of generality we may assume $\mathcal L(\Sigma)=\mathcal C^{\rm p}\mathcal G\mathcal C^{\rm s}$. Indeed, if $\mathcal L(\Sigma)\setminus\mathcal C^{\rm p}\mathcal G\mathcal C^{\rm s}$ is non-empty then replacing $\mathcal C^{\rm p}$ by $\mathcal C^{\rm p}\cup(\mathcal L(\Sigma)\setminus\mathcal C^{\rm p}\mathcal G\mathcal C^{\rm s})$ we get $\mathcal L(\Sigma)=\mathcal C^{\rm p}\mathcal G\mathcal C^{\rm s}$. 
Then (II) becomes $h(\mathcal C^{\rm p}\cup\mathcal C^{\rm s})<h_{\rm top}(\Sigma)$, and
 (I) implies 
that $\mathcal G$ has (W')-specification with gap size $t$. Overall, any subshift considered in \cite[theorem~1.1]{C18} satisfies the assumption of
theorem~\ref{mainthm}.

The Hausdorff dimension of a subshift $\Sigma$ with respect to the metric in \eqref{metric-d} equals
$h_{\rm top}(\Sigma)$ or $2h_{\rm top}(\Sigma)$ according as
 $\Sigma$ is one-sided or two-sided.
To prove theorem~\ref{mainthm} it suffices to  show that for all $a,b\in[0,\infty]$ with $a\leq b$, 
\begin{equation}\label{lower}\dim_{\rm H}R_{\sigma,\xi}(a,b)\geq\begin{cases}\vspace{1mm}\displaystyle{h_{\rm top}(\Sigma)}&\text{ if $\Sigma$ is one-sided,}\\
\displaystyle{2h_{\rm top}(\Sigma)  }&\text{ if $\Sigma$ is two-sided. }\end{cases}\end{equation}
   We prove \eqref{lower} in section~2 by constructing for each $(a,b)$ Moran fractals of large Hausdorff dimension contained in $R_{\sigma,\xi}(a,b)$.
Moran fractals are sets that are constructed by iterative procedure using a countable number of parameters, see e.g., \cite{Pes}. The Hausdorff dimension of Moran fractals can be estimated from below with the mass distribution principle \cite{Fal14}.

The construction 
of our Moran fractals breaks into three steps.
In the first step (section~\ref{seed-sec}),
 for all sufficiently large integer $k$ we construct {\it a seed set} $F_k$ that is independent of $(a,b)$, consists of non-recurrent points and has large Hausdorff dimension. 
In the second step (section~\ref{const-sec}), we modify the seed sets into subsets of the recurrence set $R_{\sigma,\xi}(a,b)$.
In the final step (section~\ref{moran-sec}) we refine these subsets to obtain our Moran fractals.
Although inspired by the construction of Feng and Wu for the full shift \cite{FW01},  
our construction of Moran fractals
  necessarily gets more involved as developed below.
  
In the first step we construct the seed set $F_k$ by representing an ergodic measure of large entropy with a collection of words and concatenating them. Due to the existence of forbidden words, it is not possible to fully concatenate a given collection of words, and (W)-specification \cite{CT12,CT13} plays a crucial role. A similar construction was successfully undertaken in \cite{STY24} for different purposes.

(W')-specification plays a crucial role in the second step.  
  In section~\ref{sp-sec} we briefly explain how it is used and why (W)-specification does not suffice.
To keep the reasonable length of this paper,
we only give a proof of theorem~\ref{mainthm} for one-sided shifts.
 A proof for two-sided shifts is completely analogous.

\subsection{Recurrence spectrum for piecewise expanding interval maps}
We also establish a statement analogous to theorem~\ref{mainthm} on the recurrence sets for a wide class of interval maps. 
We say a piecewise monotonic map $T$ is {\it of class $C^r$} $(r\geq1)$ if for each $I\in\xi_T$ there is an open interval $J$ containing $I$ such that  $T|_I$ can be extended to a $C^r$ function on $J$. We say a piecewise monotonic map $T$ is {\it piecewise expanding} if it is of class $C^1$ and 
$\inf|T'|>1$. 
If $T$ is a piecewise expanding, then $\xi_T$ is a generator.

\begin{thm}\label{monotone-thm}Let $T\colon[0,1]\to[0,1]$ be a transitive piecewise expanding map of class $C^2$.
For all $a,b\in[0,\infty]$ with $a\leq b$, we have
\[\dim_{\rm H }R_{T,\xi_T}(a,b)=1.\]
\end{thm}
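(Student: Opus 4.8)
The plan is to transfer the symbolic conclusion of Theorem~\ref{mainthm} to the interval through the coding map $\pi\colon\Sigma_T\to[0,1]$, exploiting the fact that the first return times are purely combinatorial while the Hausdorff dimension is geometric. First I would record the combinatorial correspondence. For $x\in[0,1]$ with itinerary $\omega=\pi^{-1}(x)\in\Sigma_T$, the element $(\xi_T)_n(x)$ is exactly the set of points whose itinerary agrees with $\omega$ on the first $n$ coordinates, so $T^k(x)\in(\xi_T)_n(x)$ if and only if $\sigma^k\omega\in\xi_n(\omega)$. Hence $\tau_{(\xi_T)_n(x)}(x)=\tau_{\xi_n(\omega)}(\omega)$, the return-time exponents coincide, and
\[R_{T,\xi_T}(a,b)=\pi\bigl(R_{\sigma,\xi}(a,b)\bigr)\]
up to the grand orbit of the endpoints of $\xi_T$, a countable set of Hausdorff dimension $0$. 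Since a transitive piecewise expanding map satisfies $h_{\rm top}(\Sigma_T)\ge\log\inf|T'|>0$ (each $n$-cylinder has length at most a constant times $(\inf|T'|)^{-n}$, forcing at least $(\inf|T'|)^n$ of them), Proposition~\ref{CT-dec} shows that $\Sigma_T$ satisfies the hypotheses of Theorem~\ref{mainthm}.

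The geometric input is supplied by the $C^2$ hypothesis through R\'enyi's bounded distortion condition: there is $C\ge1$ with $|(T^n)'(x)|\le C\,|(T^n)'(y)|$ whenever $x,y$ lie in a common $n$-cylinder. As $T^n$ maps $\pi([w_1\cdots w_n])$ monotonically onto an interval of length at most $1$, the mean value theorem gives $|\pi([w_1\cdots w_n])|\le C\,|(T^n)'(x)|^{-1}$ for any $x$ in the cylinder. Combined with the Birkhoff ergodic theorem applied to $\log|T'|$, this furnishes, for every ergodic $\nu$ and $\nu$-a.e.\ point,
\[-\lim_{n\to\infty}\frac1n\log|\pi([w_1\cdots w_n])|=\int\log|T'|\,d\nu=:\lambda_\nu.\]
This is the bridge between the symbolic scale, at which all $n$-cylinders have diameter $e^{-n}$, and the true Euclidean scale; it is precisely what replaces the constant $h_{\rm top}$ by the ratio entropy/Lyapunov exponent.

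The construction underlying Theorem~\ref{mainthm} builds, for each $(a,b)$, a Moran fractal $F\subseteq R_{\sigma,\xi}(a,b)$ together with a mass distribution $m$, by concatenating $\nu$-typical words (for an ergodic $\nu$) glued by (W')-specification with recurrence-inducing blocks. The gluing is insensitive to the choice of $\nu$, so I would run the identical construction taking $\nu$ to be the unique absolutely continuous invariant measure $\mu_0$ of $T$, which exists and is ergodic for transitive piecewise expanding $C^2$ maps. By Rokhlin's formula $h_{\mu_0}=\lambda_{\mu_0}$, so $\mu_0$ has dimension $h_{\mu_0}/\lambda_{\mu_0}=1$, the maximal value allowed by the inequality $h_\nu\le\lambda_\nu$. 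Pushing $m$ forward to $\pi_\ast m$ on $\pi(F)\subseteq R_{T,\xi_T}(a,b)$ and combining the Shannon--McMillan--Breiman theorem with the displayed Birkhoff limit along the construction, the mass distribution principle yields $\dim_{\rm H}R_{T,\xi_T}(a,b)\ge h_{\mu_0}/\lambda_{\mu_0}-\varepsilon=1-\varepsilon$; the bounded-length gap words from specification contribute negligibly to both entropy and exponent. Letting $\varepsilon\to0$ gives the lower bound $1$, and the reverse inequality $\dim_{\rm H}R_{T,\xi_T}(a,b)\le\dim_{\rm H}[0,1]=1$ is trivial.

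I expect the main obstacle to be the local-dimension estimate, specifically the passage from the mass of cylinders to the mass of genuine Euclidean balls $B(\pi(\omega),r)$. Because $|T'|$ is non-constant, $n$-cylinders vary widely in length, so a ball of radius $r$ may straddle a cylinder together with several neighbours of disparate sizes, and bounding the total $\pi_\ast m$-mass it captures requires bounded distortion together with a bounded-overlap argument ensuring that $B(\pi(\omega),r)$ meets only boundedly many cylinders of length comparable to $r$. Making this comparison uniform along the Moran construction, where the concatenation structure and the inserted gap words perturb the otherwise self-conformal geometry, is the delicate point, and is exactly where the $C^2$ hypothesis, hence bounded distortion, is indispensable.
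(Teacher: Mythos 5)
Your overall strategy matches the paper's: transfer the symbolic Moran construction through $\pi$, run it with the acip $\nu$ and the observable $\log|T'|$, invoke Rohlin's formula $h_\nu(T)=\chi_\nu(T)$ so that the ratio entropy/exponent equals $1$, and finish with the mass distribution principle. However, there is a genuine gap at exactly the point you flag as ``the delicate point'' and then leave unresolved: the passage from cylinder masses to masses of Euclidean balls. Your geometric input is bounded distortion plus the \emph{upper} bound ${\rm diam}\,\pi^{-1}([w_1\cdots w_n])\le C|(T^n)'(x)|^{-1}$, but the mass distribution principle here requires a \emph{lower} bound on the diameters of the intervals $\pi^{-1}([V])$ appearing in the construction: only then can you say that a ball of radius $r$ comparable to that lower bound meets a bounded number of them (the paper gets ``at most three''). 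Bounded distortion cannot supply this, because for a non-Markov piecewise monotonic map the image $T^{|V|}(\pi^{-1}([V]))$ need not contain an interval of definite size --- it can be arbitrarily short --- so a cylinder of combinatorial depth $|V|$ can be far shorter than $(\sup|(T^{|V|})'|)^{-1}$, and a single ball could then swallow many of them and too much mass. Your ``bounded-overlap argument for cylinders of length comparable to $r$'' presupposes the comparability you have not established.

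The paper's resolution is the missing idea: it builds the language decomposition from Hofbauer's Markov diagram (Lemmas~\ref{irred-lem} and~\ref{Mar-lem}) so that every good word $V$ corresponds to a path ending at a vertex of the \emph{finite} set $\mathcal D_N\cap\mathcal C$; consequently $T^{|V|}(\overline{\pi^{-1}([V])})\supset\overline{\pi^{-1}(D)}$ for some $D\in\mathcal D_N$, an interval of diameter at least $K=\inf\{{\rm diam}(\pi^{-1}(D))\colon D\in\mathcal D_N\}>0$. Combined with the uniform Birkhoff control of $\log|T'|$ over the good words (Proposition~\ref{nov***}(b) applied with $\varphi=\log|T'|\circ\pi^{-1}$ --- a uniform statement over all points of the cylinders, not just the a.e.\ SMB/Birkhoff limits you invoke), the mean value theorem yields ${\rm diam}(B)\ge K\exp(-(\chi_\nu(T)+2\varepsilon)kq)$ for every level-$q$ interval $B$ (Lemma~\ref{diam-lem}). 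This lower bound is what makes the three-interval overlap count, and hence the local dimension estimate $(h_\nu(T)-\varepsilon)/(\chi_\nu(T)+2\varepsilon)$, legitimate. Without some substitute for this step your argument does not close; notably, the $C^2$/bounded distortion hypothesis is not where the difficulty is absorbed.
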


Many maps in theorem~\ref{monotone-thm} have  discontinuities, and such maps rarely have specification \cite[theorem~1.5]{Buz97}. 
On the equality in theorem~\ref{monotone-thm}
for maps without specification, 
the only previous result we are aware of is again that of Ban and Liu
\cite[theorem~1.2]{Ban} on beta transformations
\cite{R57}.

 As a testbed of
 theorem~\ref{monotone-thm}, we take {\it the alpha-beta transformations} \cite{Par64}. These are piecewise monotonic expanding maps
$T_{\alpha,\beta}\colon[0,1]\to[0,1]$ given by 
\[T_{\alpha,\beta}(x)=\begin{cases}\beta x+\alpha-\lfloor  \beta x+\alpha\rfloor&\text{ for }x\in[0,1),\\
\lim_{x\nearrow1}T_{\alpha,\beta}(x)&\text{ for }x=1,\end{cases}\]
where $\alpha\in[0,1)$ and $\beta>1$.
The iteration of $T_{\alpha,\beta}$ generates a representation of real numbers.
For each $x\in[0,1)$
write $e_{\alpha,\beta,1}(x)=\lfloor\beta x+\alpha\rfloor$, and
$e_{\alpha,\beta,k+1}(x)=e_{\alpha,\beta,1}(T_{\alpha,\beta}^k(x))$
for $k\in\mathbb N$. 
For all $x\in[0,1)$ we have
\[x=\frac{e_{\alpha,\beta,1}(x)-\alpha+T_{\alpha,\beta}(x)}{\beta},\] and thus
\[T_{\alpha,\beta}(x)=\frac{e_{\alpha,\beta,1}(T_{\alpha,\beta}(x))-\alpha+T_{\alpha,\beta}^2(x)}{\beta}.\]
Plugging the last equality into the previous one gives
\[x=\frac{e_{\alpha,\beta,1}(x)-\alpha}{\beta}+\frac{e_{\alpha,\beta,2}(x)-\alpha}{\beta^2}+\frac{T_{\alpha,\beta}^2(x)}{\beta^2}.\]
Repeating this procedure we obtain {\it the $(\alpha,\beta)$-expansion} of $x$:
\[x=\sum_{n=1}^\infty\frac{e_{\alpha,\beta,n}(x)-\alpha}{\beta^n}.\]

If $m\in\mathbb N$, $m\geq2$ and $m-1-\alpha<\beta\leq m-\alpha$, then
the integers $e_{\alpha,\beta,k}(x)$, $k\in\mathbb N$ are in $\{0,\ldots,m-1\}$,  and form a sequence that encodes the $T_{\alpha,\beta}$-orbit of $x$ 
into a sequence in 
$\Sigma_m$. 
Define pairwise disjoint subintervals $I_j$, $j=0,\ldots,m-1$ 
of $[0,1)$ by
 \[\begin{split}I_0=\left[0,\frac{1-\alpha}{\beta}\right)\ \text{ and }\  
 I_{m-1}
 =\left[\frac{m-1-\alpha}{\beta},1\right),\end{split}\]
 and if $m\geq3$ then
\[I_j=\left[\frac{j-\alpha}{\beta},\frac{j+1-\alpha}{\beta}\right) \
 \text{ for }j=1,\ldots,m-2.\]
 Notice that $e_{\alpha,\beta,n}(x)=j$ if and only if $T_{\alpha,\beta}^{n-1}(x)\in I_j$.
Define
\[\tau_{\alpha,\beta,n}(x)=\inf\{k\in\mathbb N\colon e_{\alpha,\beta,i+k}(x)=e_{\alpha,\beta,i}(x)\text{ for }i=1,\ldots,n\}.\] 
As a corollary to theorem~\ref{monotone-thm} we obtain the following statement.

\begin{cor}\label{cor1}
If $(\alpha,\beta)\in[0,1)\times(1,\infty)$ and $T_{\alpha,\beta}$ is transitive,  
then for all $a,b\in[0,\infty]$ with $a\leq b$, we have
\[\dim_{\rm H}\left\{x\in [0,1)\colon\liminf_{n\to\infty}\frac{\log\tau_{\alpha,\beta,n}(x)}{n}=a\ \text{ and }\ \limsup_{n\to\infty}\frac{\log\tau_{\alpha,\beta,n}(x)}{n}=b\right\}=1.\]
\end{cor}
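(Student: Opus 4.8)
The plan is to deduce Corollary~\ref{cor1} directly from Theorem~\ref{monotone-thm} by recognizing the set in the statement as a recurrence set $R_{T_{\alpha,\beta},\xi_{T_{\alpha,\beta}}}(a,b)$ for the map $T_{\alpha,\beta}$. First I would check that $T_{\alpha,\beta}$ satisfies the hypotheses of Theorem~\ref{monotone-thm}. On each interval of monotonicity the map is affine with slope $\beta$, so it is of class $C^\infty$ (in particular $C^2$), piecewise monotonic, and piecewise expanding because $\inf|T_{\alpha,\beta}'|=\beta>1$. Choosing the integer $m=\lceil\beta+\alpha\rceil\geq2$ we have $m-1-\alpha<\beta\leq m-\alpha$, so the monotonicity partition of $T_{\alpha,\beta}$ is exactly $\xi_{T_{\alpha,\beta}}=\{I_0,\ldots,I_{m-1}\}$ with the intervals $I_j$ displayed above. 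Transitivity of $T_{\alpha,\beta}$ is assumed. Hence Theorem~\ref{monotone-thm} applies and yields $\dim_{\rm H}R_{T_{\alpha,\beta},\xi_{T_{\alpha,\beta}}}(a,b)=1$.

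The key step is to identify the return-time function appearing in the corollary with the cylinder return time used in the theorem. Fix $x\in[0,1)$ and $n\in\mathbb N$. Since $e_{\alpha,\beta,i}(y)=j$ if and only if $T_{\alpha,\beta}^{i-1}(y)\in I_j$, the element $(\xi_{T_{\alpha,\beta}})_n(x)$ of the refined partition consists precisely of the points $y$ whose first $n$ digits agree with those of $x$, that is, $e_{\alpha,\beta,i}(y)=e_{\alpha,\beta,i}(x)$ for $i=1,\ldots,n$. From the defining relation $e_{\alpha,\beta,i+1}(x)=e_{\alpha,\beta,1}(T_{\alpha,\beta}^{i}(x))$ one gets the shift identity $e_{\alpha,\beta,i}(T_{\alpha,\beta}^{k}(x))=e_{\alpha,\beta,i+k}(x)$. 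Consequently $T_{\alpha,\beta}^{k}(x)\in(\xi_{T_{\alpha,\beta}})_n(x)$ holds if and only if $e_{\alpha,\beta,i+k}(x)=e_{\alpha,\beta,i}(x)$ for $i=1,\ldots,n$, which is exactly the condition defining $\tau_{\alpha,\beta,n}(x)$. Therefore $\tau_{(\xi_{T_{\alpha,\beta}})_n(x)}(x)=\tau_{\alpha,\beta,n}(x)$ for every $x\in[0,1)$ and every $n$, and the set in Corollary~\ref{cor1} coincides with $R_{T_{\alpha,\beta},\xi_{T_{\alpha,\beta}}}(a,b)\cap[0,1)$.

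Combining the two steps finishes the proof: the Hausdorff dimension of the set in the corollary equals $\dim_{\rm H}R_{T_{\alpha,\beta},\xi_{T_{\alpha,\beta}}}(a,b)$, up to removing the single point $x=1$, which is irrelevant for Hausdorff dimension, and this is $1$ by Theorem~\ref{monotone-thm}. I do not expect a genuine obstacle here, since the corollary is essentially a reformulation; the only point requiring care is the bookkeeping that makes the identification $\tau_{(\xi_{T_{\alpha,\beta}})_n(x)}(x)=\tau_{\alpha,\beta,n}(x)$ exact. Because the intervals $I_j$ are half-open, each $x\in[0,1)$ has a well-defined digit sequence and its forward orbit stays in $[0,1)$, so no exceptional set of points whose orbit meets a partition endpoint needs to be discarded; any remaining boundary phenomena are confined to a countable set of dimension zero and do not affect the dimension computation.
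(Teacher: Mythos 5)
Your proposal is correct and follows exactly the route the paper intends: the corollary is stated as an immediate consequence of Theorem~\ref{monotone-thm}, obtained by checking that $T_{\alpha,\beta}$ is a transitive piecewise expanding map of class $C^2$ and identifying $\tau_{\alpha,\beta,n}(x)$ with $\tau_{(\xi_{T_{\alpha,\beta}})_n(x)}(x)$ via the relation $e_{\alpha,\beta,n}(x)=j\iff T_{\alpha,\beta}^{n-1}(x)\in I_j$. Your handling of the half-open intervals and the single point $x=1$ is the right level of care for this bookkeeping.
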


For $\alpha=0$, 
the equality in corollary~\ref{cor1} follows from
\cite{FW01,Ols04,SW03} for 
$\beta\in\mathbb N$, and from \cite{Ban} for any
$\beta>1$.
 The set of  $(\alpha,\beta)\in(0,1)\times(1,\infty)$ for which the coding space of $T_{\alpha,\beta}$ has the periodic specification property is of Lebesgue measure zero \cite{Buz97}, of Hausdorff dimension two \cite{T25}.
For $(\alpha,\beta)$ in this set, the equality in corollary~\ref{cor1} follows from \cite{LS08}.
It is not hard to show that 
if $0\leq\alpha<1$ and $\beta\geq2$ then $T_{\alpha,\beta}$ is transitive (see proposition~\ref{trans-prop} in appendix~A). When $\alpha\leq 2-\beta$, $T_{\alpha,\beta}$ is not always transitive \cite{G90,Pal79}.

Theorem~\ref{monotone-thm} is not a direct translation of theorem~\ref{mainthm}.
Our main tool is {\it a Markov diagram} introduced by Hofbauer \cite{H2}.
In section~3, 
we show that the coding space of any map in theorem~\ref{monotone-thm} satisfies the assumption in theorem~\ref{mainthm}. This allows us to immediately transfer the Moran fractals in the coding space constructed in section~2 to the interval $[0,1]$.
 When applying the mass distribution principle to the transferred Moran fractals, the difficulty is that the lengths of cylinders in the coding space are not well comparable to the Euclidean lengths of the corresponding intervals in $[0,1]$.
 This is due to the lack of specification or Markovianness, see e.g., \cite[proposition~2.6]{LW08} for the case of the beta transformations. 
 We have managed to obtain a nice lower bound on the lengths of intervals, see lemma~\ref{diam-lem}.

The rest of this paper consists of two sections. 
Section~2 is devoted to the analysis of recurrence sets for subshifts in theorem~\ref{mainthm}. Section~3 is devoted to the analysis of recurrence sets for interval maps in theorem~\ref{monotone-thm}.

\section{Dimension of recurrence sets for subshifts}
This section is devoted to the analysis of recurrence sets for subshifts in theorem~\ref{mainthm}.
In section~\ref{sp-sec} we introduce (W')-specification, and investigate its relationship with (W)-specification in \cite{CT12,CT13}.
In section~\ref{compt-sec} we state one lemma the proof of which is deferred to appendix~A.
In section~\ref{seed-sec} we construct seed sets. In section~\ref{const-sec} we modify the seed sets into recurrence sets, and in section~\ref{moran-sec} 
construct Moran fractals refining these subsets. 
In section~\ref{end-sec} we complete the proof of theorem~\ref{mainthm}.
\subsection{Various notions of specification}\label{sp-sec}
We say a subshift $\Sigma$
has   {\it the periodic specification property}  if there is an integer $t\geq0$ 
such that 
for every integer $k\geq2$ and all $v^1,\ldots,v^k\in\mathcal L(\Sigma)$, there are $w^1,\ldots, w^{k}\in\mathcal L(\Sigma)$ such that
$v^1w^1v^2w^2
\cdots v^kw^k\in\mathcal L(\Sigma)$ and $|w^i|= t$ for $i=1,\ldots,k$, and the cylinder
$[v^1w^1v^2w^2
\cdots v^kw^k]$ contains a periodic point
of period $|v^1w^1v^2w^2\cdots v^kw^k|$.
If we further replace $|w^i|=t$ by $|w^i|\leq t$, we say $\mathcal L(\Sigma)$ has 
{\it (W)-specification} (\cite[p.792 second remark]{CT12}).

The periodic specification property is a shift space version of the definition of specification introduced in \cite{Bow71}. See also \cite{DGS,KH}.
The specification property and the periodic specification property are equivalent \cite[section~2]{KLO16}. 
The (W)-specification for $\mathcal L(\Sigma)$ is a shift space version of the definition of specification in \cite{LS08}.

A natural way to relax the specification property is to require that connected words be chosen from a proper subset of $\mathcal L(\Sigma)$ as follows.

\begin{definition}[(W)-specification \cite{CT12,CT13}]
Let $\Sigma$ be a subshift.
A proper subset $\mathcal G$ of $\mathcal L(\Sigma)$ has   {\it (W)-specification} if there is an integer $t\geq0$, called a {\it gap size}, 
such that  for all integer $k\geq2$ and all $v^1,\ldots,v^k\in\mathcal G$, there are $w^1,\ldots, w^{k-1}\in\mathcal L(\Sigma)$ such that $v^1w^1v^2w^2
\cdots w^{k-1}v^k\in\mathcal L(\Sigma)$ and $|w^i|\leq t$ for $i=1,\ldots,k-1$.
\end{definition} 

We will construct
points in designated recurrence sets by constructing longer and longer words inductively,  replicating at each step prefixes of words which are constructed in the previous step. See section~\ref{const-sec} for details. Clearly, (W)-specification cannot be used for this construction. Below we introduce another specification that is appropriate for this inductive construction.

\begin{definition}[(W')-specification]
\label{def-strong}
Let $\Sigma$ be a subshift.
A subset $\mathcal G$ of $\mathcal L(\Sigma)$ 
containing the empty word has {\it (W')-specification} if there exists an integer $t\geq0$, called a {\it gap size}, such that
for all integers $j,k\geq2$ and all $v^1,\ldots,v^{j+k}\in\mathcal G$,  all
   $w^1,\ldots,w^{j+k-1}\in\mathcal L(\Sigma)$ 
 such that $u:=v^1w^1v^2w^2\cdots w^jv^j\in\mathcal L(\Sigma)$,  $v:=v^{j+1}w^{j+1}v^{j+2}w^{j+2}\cdots w^{j+k}v^{j+k}\in \mathcal L(\Sigma)$ 
  and $|w^i|\leq t$ for $i=1,\ldots,j+k-1$,
 there is $w\in\mathcal L(\Sigma)$ such that  
 $uwv\in\mathcal L(\Sigma)$ and $|w|\leq t$.
\end{definition}
In other words, the (W')-specification for $\mathcal G$ with gap size $t$ requires that any pair $u$, $v$ of concatenations of elements of $\mathcal G$ by words of lengths not exceeding $t$ be connected by a word $w$ of length not exceeding $t$.

\begin{lemma}\label{w-implication}
Let $\Sigma$ be a subshift. If a subset $\mathcal G$ of $\mathcal L(\Sigma)$ containing the empty word has (W')-specification with gap size $t$, then $\mathcal G$ has (W)-specification with gap size $t$.\end{lemma}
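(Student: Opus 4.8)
The plan is to deduce (W)-specification from (W')-specification by induction on the number $k\geq 2$ of words to be concatenated, exploiting the fact that $\mathcal G$ contains the empty word $\emptyset$. The conceptual gap between the two notions is that (W')-specification only glues together two words $u,v$, each of which is itself a concatenation of \emph{at least two} elements of $\mathcal G$ separated by gaps of length $\leq t$, whereas (W)-specification asks to string together arbitrarily many \emph{single} elements of $\mathcal G$. The empty word $\emptyset\in\mathcal G$ will serve as padding, allowing a single element of $\mathcal G$ to be presented as a legitimate two-word concatenation, and the induction will bridge the fact that (W')-specification glues only two pieces at a time.

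For the base case $k=2$, given $v^1,v^2\in\mathcal G$ I would set $u:=v^1\emptyset\emptyset=v^1$ and $v:=\emptyset\emptyset v^2=v^2$, using the rules $\emptyset\emptyset=\emptyset$ and $\emptyset w=w\emptyset=w$. Here $u$ is read as a concatenation of the two $\mathcal G$-words $v^1,\emptyset$ with a single empty gap, and $v$ as a concatenation of $\emptyset,v^2$ with an empty gap; both lie in $\mathcal L(\Sigma)$ since $v^1,v^2\in\mathcal G\subseteq\mathcal L(\Sigma)$, and all gaps have length $0\leq t$. Applying (W')-specification to this pair (a left block of two words and a right block of two words) yields $w\in\mathcal L(\Sigma)$ with $|w|\leq t$ and $v^1wv^2=uwv\in\mathcal L(\Sigma)$, which is precisely (W)-specification for $k=2$.

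For the inductive step, assuming the assertion for some $k\geq 2$ and given $v^1,\ldots,v^{k+1}\in\mathcal G$, I would first apply the inductive hypothesis to $v^1,\ldots,v^k$ to obtain gaps $w^1,\ldots,w^{k-1}$ of length $\leq t$ with $u:=v^1w^1\cdots w^{k-1}v^k\in\mathcal L(\Sigma)$; this $u$ is a concatenation of the $k\geq 2$ words $v^1,\ldots,v^k$ in exactly the shape required in Definition~\ref{def-strong}. Padding on the right as before, set $v:=\emptyset\emptyset v^{k+1}=v^{k+1}\in\mathcal L(\Sigma)$, a two-word concatenation of $\emptyset,v^{k+1}\in\mathcal G$. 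Then (W')-specification, applied to the left block of $k$ words and the right block of two words, supplies $w\in\mathcal L(\Sigma)$ with $|w|\leq t$ and $uwv\in\mathcal L(\Sigma)$, so that $w^k:=w$ extends the concatenation to $v^1w^1\cdots w^{k-1}v^kw^kv^{k+1}\in\mathcal L(\Sigma)$, completing the induction.

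I do not anticipate a genuine obstacle here; the only points requiring care are bookkeeping, namely verifying that the empty-word padding indeed produces concatenations of the precise form demanded in Definition~\ref{def-strong} (in particular that each padded block lies in $\mathcal L(\Sigma)$ and uses only gaps of length $\leq t$), and correctly matching the index parameters when invoking (W')-specification in the inductive step, since the second parameter in that definition clashes notationally with the induction variable.
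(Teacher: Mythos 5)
Your proof is correct and follows essentially the same route as the paper: both arguments pad single words with the empty word $\emptyset\in\mathcal G$ so that they qualify as two-word concatenations in the sense of Definition~\ref{def-strong}, and then attach $v^{q+1}$ to the already-built concatenation $v^1w^1\cdots w^{q-1}v^q$ one step at a time via (W')-specification. The only difference is cosmetic — you organize the repetition as a formal induction while the paper phrases it as "repeat the same argument."
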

\begin{proof}Let $k\geq2$ be an integer and let $v^1,\ldots,v^k\in\mathcal G$.  
Since $v^1=v^1\emptyset\emptyset$, $v^2=v^2\emptyset\emptyset$, and $\mathcal G$ contains the empty word, by the (W')-specification with gap size $t$ there is $w^1\in\mathcal L(\Sigma)$ such that $v^1w^1v^2\in\mathcal L(\Sigma)$ and $|w^1|\leq t$. If $k=2$ then we are done. Otherwise, by the (W)-specification with gap size $t$, 
there is $w^2\in\mathcal L(\Sigma)$ such that $v^1w^1v^2w^2v^3\in\mathcal L(\Sigma)$ and $|w^2|\leq t$. If $k=3$ then we are done. Otherwise we repeat the same argument. This verifies the (W)-specification for $\mathcal G$ with gap size $t$. \end{proof}

\subsection{Computational lemma}\label{compt-sec}
The next lemma will be used in the construction of Moran fractals contained in each recurrence set.
\begin{lemma}\label{computational}For all $a,b\in[0,\infty]$ with $a\leq b$, there exist 
sequences $(\ell_p)_{p=1}^\infty$,  $(\gamma_p)_{p=1}^\infty$ of positive reals such that: \begin{itemize}\item[(a)]
$\lim_{p\to\infty}\gamma_{2p+1}=a$ and $\lim_{p\to\infty}\gamma_{2p}=b$;
\item[(b)] $\lim_{p\to\infty}(\ell_{p+1}-\ell_p)= \infty$;
\item[(c)]
$\lim_{p\to\infty}\ell_p/e^{\gamma_p\ell_p}=0$;

\item[(d)] $
\limsup_{p\to\infty}\gamma_{p+1}\ell_{p+1}/\ell_p\leq b$;
\item[(e)]
$\gamma_{p} \ell_p+1\leq\gamma_{p+1} \ell_{p+1}$
for all $p\in\mathbb N$;

\item[(f)]for any constant $c\geq0$,  $\lim_{p\to\infty}(cp+\sum_{j=1}^{p} \ell_j)/e^{\gamma_p\ell_p}=0.$
\end{itemize}

\end{lemma}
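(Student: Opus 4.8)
The plan is to work with the product sequence $Q_p:=\gamma_p\ell_p$ and to build $(\gamma_p)_p$ and $(\ell_p)_p$ simultaneously. First I would record three reductions. Condition (e) says exactly that $Q_p$ is strictly increasing with $Q_{p+1}-Q_p\ge1$, so $Q_p\ge Q_1+(p-1)\to\infty$; in particular $Q_p$ grows at least linearly and $e^{Q_p}$ dominates every power of $p$. Because all $\ell_j>0$ we have $\ell_p\le\sum_{j=1}^p\ell_j$, so (c) is the case $c=0$ of (f); and because $(\ell_p)$ will be increasing we have $\sum_{j=1}^p\ell_j\le p\ell_p$, so (f) reduces to the single estimate $p\ell_p=o(e^{Q_p})$ (the term $cp$ being harmless since $Q_p\gtrsim p$). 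The remaining content is therefore to choose $\gamma_p$ with the odd/even subsequential limits in (a) and $\ell_p$ positive, increasing, with $\ell_{p+1}-\ell_p\to\infty$ as in (b), so that (e) holds and, writing (d) as $\limsup_p Q_{p+1}/\ell_p\le b$, so does (d).

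Next I would dispose of the diagonal $a=b$, where (d) becomes $\limsup_p Q_{p+1}/\ell_p\le a$. Here I take $\ell_p=p^2$, so that $\ell_{p+1}/\ell_p\to1$ while $\ell_{p+1}-\ell_p=2p+1\to\infty$, and set $\gamma_p\equiv a$ if $0<a<\infty$, $\gamma_p=1/p$ if $a=b=0$, and $\gamma_p=p$ if $a=b=\infty$; then $Q_p$ equals $ap^2$, $p$, $p^3$ respectively. In each case $Q_p$ is strictly increasing with increments eventually at least $1$ (and for all $p$ after rescaling $\ell_p$ by a positive constant), (d) is immediate, and $p\ell_p/e^{Q_p}\to0$ because the exponential beats the polynomial numerator, giving (f).

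For the oscillating case $a<b$ I would let $\gamma$ alternate, $\gamma_{2q-1}=\alpha_q$ and $\gamma_{2q}=\beta_q$, where $\alpha_q\to a$ and $\beta_q\to b$ are auxiliary sequences in $(0,\infty)$ with $\alpha_q\le\beta_q$: the constants $a,b$ when these are finite and positive, and $1/q$, $q$ to approximate the endpoints $0$, $\infty$. Starting from a large $\ell_1$, I define $\ell_p$ by a two-regime recursion: on each even-to-odd step I force equality in (e), $Q_{2q+1}=Q_{2q}+1$, i.e. $\ell_{2q+1}=(\beta_q\ell_{2q}+1)/\alpha_{q+1}$; on each odd-to-even step I set $\ell_{2q}=\ell_{2q-1}+\sqrt{\ell_{2q-1}}$ and $Q_{2q}=\beta_q\ell_{2q}$. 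The even-to-odd rule gives $Q_{2q+1}/\ell_{2q}=\beta_q+1/\ell_{2q}\to b$ and the odd-to-even rule gives $Q_{2q}/\ell_{2q-1}=\beta_q(1+\ell_{2q-1}^{-1/2})\to b$, so $\limsup_p Q_{p+1}/\ell_p=b$ and (d) holds; even-to-odd is an equality in (e), while odd-to-even gives $Q_{2q}-Q_{2q-1}\ge\beta_q\sqrt{\ell_{2q-1}}\ge1$ for $q$ large since $b>0$; and both increments of $\ell$ tend to $\infty$, the odd-to-even one being $\sqrt{\ell_{2q-1}}\to\infty$ and the even-to-odd one being $\asymp\ell_{2q}(\beta_q/\alpha_{q+1}-1)\to\infty$ because $\liminf_q\beta_q/\alpha_{q+1}>1$, which is (b). Finally (f) follows from $p\ell_p=o(e^{Q_p})$: on odd indices $Q\ge\alpha\ell$ when $a>0$, while when $a=0$ one has $Q_{2q-1}=\ell_{2q-1}/q$, whence $\ell_{2q-1}=qQ_{2q-1}\lesssim Q_{2q-1}^2$, and in every subcase $e^{Q_p}$ dominates $p\ell_p$.

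The step I expect to be the main obstacle is the simultaneous reconciliation of (b), (d) and (e) in the oscillating case: (e) forces $Q$ to keep increasing, (d) caps $Q_{p+1}$ by essentially $b\ell_p$, and (b) demands unbounded increments of $\ell$. When $0<a<b<\infty$ these constraints leave almost no freedom, forcing $\ell_{2q+1}/\ell_{2q}\to b/a$ and $\ell_{2q}/\ell_{2q-1}\to1$, so the recursion above is essentially the only natural choice, and checking that it meets all of (a)--(f) at once is the crux. A secondary difficulty is the bookkeeping at the endpoints $a=0$ and $b=\infty$, where $\alpha_q,\beta_q$ degenerate and one must verify that $Q_p$ still grows fast enough for the dominations underlying (c) and (f); this is precisely where the linear lower bound $Q_p\ge Q_1+(p-1)$ and the reduction of (f) to $p\ell_p=o(e^{Q_p})$ carry the argument.
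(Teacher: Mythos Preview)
Your approach parallels the paper's: both build $(\ell_p)$ and $(\gamma_p)$ by a parity-dependent recursion in the oscillating case $a<b$, using the step $\ell\mapsto\ell+\sqrt{\ell}$ on one parity and letting $\gamma$ alternate between values approaching $a$ and $b$, with the endpoints $0,\infty$ handled by a drifting auxiliary sequence. Your framing is somewhat cleaner --- working with $Q_p=\gamma_p\ell_p$, collapsing (c) and (f) into the single estimate $p\,\ell_p=o(e^{Q_p})$, and taking $\ell_p=p^2$ on the diagonal --- whereas the paper uses a different even-to-odd rule (a multiplicative scaling in place of your $Q_{2q+1}=Q_{2q}+1$) and, when $a=0$, ties the drifting $\gamma$ to $\ell$ itself (namely $\gamma_{\text{odd}}=1/\sqrt{\ell_{\text{previous}}}$) rather than to the index $q$.

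There is one real slip in the endpoint bookkeeping you yourself flag as a secondary difficulty. You assert $\alpha_q\le\beta_q$, but your concrete choices violate this for small $q$: if $a=0$ and $0<b<1$ then $\alpha_1=1>b=\beta_1$; if $b=\infty$ and $a>1$ then $\alpha_1=a>1=\beta_1$. In those regimes the odd-to-even increment
\[
Q_{2q}-Q_{2q-1}=(\beta_q-\alpha_q)\,\ell_{2q-1}+\beta_q\sqrt{\ell_{2q-1}}
\]
is negative once $\ell_{2q-1}$ is large, so (e) --- which must hold for \emph{every} $p$, not merely eventually --- fails already at $p=1$; the even-to-odd step can likewise produce $\ell_{2q+1}<\ell_{2q}$, so $(\ell_p)$ is not monotone from the start as your reduction for (f) assumes. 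The repair is immediate: replace $1/q$ by $\min(b,1/q)$ when $a=0$ and $q$ by $\max(a,q)$ when $b=\infty$ (equivalently, shift $q\mapsto q+q_0$), after which your verifications go through as written.
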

Since a proof of lemma~\ref{computational} is purely computational,
we defer it to appendix~\ref{compute-s}. Nevertheless,
it is an integral component of the proofs of our main results.

\subsection{Construction of seed sets}\label{seed-sec}
Let $\Sigma$ be a subshift.
Let $\mathcal M(\Sigma,\sigma)$ denote the set of $\sigma$-invariant Borel probability measures on $\Sigma$. For
a function $\varphi\colon\Sigma\to\mathbb R$
and an integer $k\geq1$, write $S_k\varphi$ for the sum $\sum_{i=0}^{k-1}\varphi\circ\sigma^i$.
Let $v$, $w=w_1\cdots w_n$ be words from $\{0,\ldots,m-1\}$. We say $v$ is {\it a subword} of $w$ if there exist $i,j\in\{1,\ldots n\}$ such that $i\leq j$ and $w_i\cdots w_j=v$.
The next proposition is central to the construction of seed sets.
\begin{prop}\label{nov***}Let $\Sigma$ be a subshift  
that has a language decomposition $\mathcal L(\Sigma)=\mathcal C^{\rm p}\mathcal G\mathcal C^{\rm s}$ with $h(\mathcal C^{\rm p}\cup\mathcal C^{\rm s})<h_{\rm top}(\Sigma)$.
Let $\mu\in\mathcal M(\Sigma,\sigma)$ be ergodic and satisfy $h_{\mu}(\sigma)>h(\mathcal C^{\rm p}\cup\mathcal C^{\rm s})$. 
Let 
$\varphi\colon\Sigma\to\mathbb R$ be a continuous function.
For any sufficiently small $\varepsilon\in(0,h_\mu(\sigma))$
and any $M\geq1$, 
there exist an integer $k\geq M$, a subset $\mathcal Q_k$ 
of $\mathcal G_k$ and $v^*\in\mathcal G_k\setminus\mathcal Q_k$ such that:
\begin{itemize}
\item[(a)] 
$\#\mathcal Q_{k}\geq \exp\left((h_\mu(\sigma)-\varepsilon)k\right)$;
\item[(b)] for all $v\in\mathcal Q_k$ and all $x\in[v]$, 
\[\left|\frac{S_k\varphi(x) }{k}- \int \varphi d\mu\right|<\varepsilon;
\]

\item[(c)] if $t\in\mathbb N\cup\{0\}$, $w\in\mathcal L(\Sigma)$ satisfy $|w|\leq t<k/3$, then 
for all $u,v\in\mathcal Q_k$, $v^*$ is not a subword of $uwv$.
\end{itemize}
\end{prop}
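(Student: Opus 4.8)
The plan is to realize the words of $\mathcal Q_k$ as the $\mathcal G$-parts of long $\mu$-typical words, and to isolate $v^*$ through a purely combinatorial occurrence analysis. Throughout write $h=h_\mu(\sigma)$ and $c=h(\mathcal C^{\rm p}\cup\mathcal C^{\rm s})$, so $c<h$, and fix auxiliary $\delta,\varepsilon'>0$ with $4\delta<\min\{\varepsilon,h-c\}$ and $3\delta+\varepsilon'\log m<h-c$. First I would assemble the two ergodic-theoretic inputs at a large length $n$. By the Shannon--McMillan--Breiman theorem there are at least $e^{n(h-\delta)}$ cylinders $[v]$, $v\in\mathcal L_n(\Sigma)$, of measure at most $e^{-n(h-\delta)}$. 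By Birkhoff's theorem together with Egorov's theorem there is a set of measure $>1-\delta$ on which the averages $S_j\varphi(\sigma^i\cdot)/j$ converge to $\int\varphi\,d\mu$ uniformly over all $i\ge0$ and $j\ge\varepsilon' n$ with $i+j\le n$; this ``strong genericity'' follows from $S_j\varphi(\sigma^ix)=S_{i+j}\varphi(x)-S_i\varphi(x)$ and the boundedness of $(i+j)/j$. Since $\xi$ is a generator and $\varphi$ is uniformly continuous, $S_j\varphi(z)/j$ is constant on each $j$-cylinder up to an error $o(1)$ as $j\to\infty$. Intersecting the two inputs yields a family $\mathcal P_n$ of at least $e^{n(h-2\delta)}$ words $v\in\mathcal L_n(\Sigma)$ every sub-block of which of length $\ge\varepsilon' n$ has $S\varphi$-average within $\delta$ of $\int\varphi\,d\mu$ on its cylinder. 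This is the source of (b).

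Next I would extract the $\mathcal G$-middle parts. Writing each $v\in\mathcal P_n$ as $v=c^{\rm p}g c^{\rm s}$ with $g=g(v)\in\mathcal G$, the entropy gap controls short middles: with a constant $K$ chosen so that $\#\mathcal C^{\rm p}_i\le Ke^{i(c+\delta)}$ and $\#\mathcal C^{\rm s}_l\le Ke^{l(c+\delta)}$ for all $i,l$, the number of $v\in\mathcal L_n(\Sigma)$ with $|g(v)|<\varepsilon' n$ is at most $(n+1)^2K^2e^{n(c+\delta+\varepsilon'\log m)}$, which is $\ll e^{n(h-2\delta)}$ by our choice of $\delta,\varepsilon'$. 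Discarding these and passing to a dominant length profile $(i^*,k,l^*)$ with $k\ge\varepsilon' n$, I retain $\ge e^{n(h-3\delta)}$ words of a single profile. The fibres of $v\mapsto g(v)\in\mathcal G_k$ have size at most $\#\mathcal C^{\rm p}_{i^*}\#\mathcal C^{\rm s}_{l^*}\le K^2e^{(n-k)(c+\delta)}$, so the image $\mathcal Q_k\subseteq\mathcal G_k$ satisfies $\#\mathcal Q_k\ge K^{-2}e^{n(h-c-4\delta)+k(c+\delta)}\ge e^{k(h-\varepsilon)}$, using $n\ge k$ and $h-c>4\delta$. Each $g(v)$, being of length $k\ge\varepsilon' n$, inherits the average estimate, giving (a) and (b); taking $n$ large forces $k\ge M$.

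The heart of the matter is (c), which I expect to be the main obstacle. The starting point is an occurrence analysis: since $|w|\le t<k/3$ and $|v^*|=k$, any appearance of $v^*$ inside $uwv$ (of length $2k+|w|<7k/3$) which is not the trivial match $v^*=u$ or $v^*=v$ must, by a direct examination of the possible window positions, force either a prefix of $v^*$ of length $>k/3$ to be a suffix of $u\in\mathcal Q_k$, or a suffix of $v^*$ of length $>k/3$ to be a prefix of $v\in\mathcal Q_k$. Since $k/3\ge\varepsilon' k$, those long prefixes and suffixes of $\mathcal Q_k$-words again have $S\varphi$-average within $\delta$ of $\int\varphi\,d\mu$. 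Hence it suffices to choose $v^*\in\mathcal G_k\setminus\mathcal Q_k$ all of whose sub-blocks of length $>k/3$ have $S\varphi$-average bounded away from $\int\varphi\,d\mu$: then neither alignment is possible.

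To produce such a $v^*$ I would split into two regimes. If the $\mu$-average $\int\varphi\,d\mu$ is not shared by all ergodic measures of entropy exceeding $c$, then there is an ergodic $\nu$ with $h_\nu(\sigma)>c$ and $\int\varphi\,d\nu\ne\int\varphi\,d\mu$, and applying the very same extraction to $\nu$ at length $k$ yields a word in $\mathcal G_k$ whose every long sub-block has average near $\int\varphi\,d\nu$, hence uniformly biased away from $\int\varphi\,d\mu$. Otherwise the Birkhoff-good words are exponentially dominant in $\mathcal G_k$, so the at most $2k\,e^{(2k/3)(h_{\rm top}(\Sigma)+\delta)}$ words that could collide with a fixed $v^*$ are negligible against the pool $\#\mathcal G_k\approx e^{kh_{\rm top}(\Sigma)}$, and I fix any $v^*\in\mathcal G_k$ and simply delete the colliding words from $\mathcal Q_k$, keeping $\ge e^{k(h-\varepsilon)}$ of them. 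The delicate points I anticipate are aligning the $\mu$- and $\nu$-extractions at a common length $k$ (and selecting $k$ along a subsequence realizing $\#\mathcal G_k\approx e^{kh_{\rm top}(\Sigma)}$), and the observation that a single atypical \emph{total} average for $v^*$ would not suffice—one genuinely needs every long sub-block of $v^*$ to be biased, which is precisely why an auxiliary invariant measure $\nu$, rather than an ad hoc word, drives the construction.
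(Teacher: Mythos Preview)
Your extraction of the $\mathcal G$-middle parts and the derivation of (a) and (b) are essentially right and parallel the paper. The combinatorial reduction for (c)---that a non-trivial occurrence of $v^*$ in $uwv$ forces a prefix of $v^*$ of length $>k/3$ to be a suffix of $u$, or a suffix of $v^*$ of length $>k/3$ to be a prefix of $v$---is also correct. The gap is in how you then bound the bad words.

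Your regime~2 is where it breaks. For a fixed $v^*$, the number of $u\in\mathcal L_k(\Sigma)$ whose suffix of a prescribed length $\ell>k/3$ is determined is at most $\#\mathcal L_{k-\ell}(\Sigma)$, giving a total of order $e^{(2k/3)(h_{\rm top}(\Sigma)+\delta)}$ bad words. This is negligible against $\#\mathcal Q_k\approx e^{k(h_\mu(\sigma)-\varepsilon)}$ only when $h_\mu(\sigma)>\tfrac{2}{3}h_{\rm top}(\Sigma)$, which is nowhere in the hypotheses. And when $\varphi$ is constant---precisely the case needed for Theorem~\ref{mainthm}, as Remark~\ref{rem-varphi} records---your regime~1 is vacuous, so you are always in this broken regime. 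The sentence about Birkhoff-good words being dominant in $\mathcal G_k$ does not help: dominance in $\mathcal G_k$ says nothing about how many of the bad words lie in the much smaller set $\mathcal Q_k$. (Regime~1 has its own, acknowledged but unresolved, length-alignment issue between the $\mu$- and $\nu$-extractions.)

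The paper sidesteps all of this by choosing $v^*$ from the $\mu$-typical pool itself and counting bad words \emph{with the measure $\mu$} rather than by free symbols. It first refines to words sharing a common prefix $u^*$ in the ambient $\mathcal P_{k_0}$, so that each $u^*v$ lies in the SMB-controlled family $\mathcal P$, and uses the prefix-closure property of Lemma~\ref{extract-lem}(d). Then the two-sided bounds $e^{-n(h_\mu(\sigma)\pm\varepsilon)}$ on cylinder measures show that the number of $v$ sharing a prescribed prefix of length $k-i+1$ with $v^*$ is at most $e^{O(\varepsilon k)}e^{(i-1)h_\mu(\sigma)}$, and similarly for suffixes. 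Summing over $i\le\lfloor 2k/3+1\rfloor$ yields at most $e^{(2k/3)h_\mu(\sigma)+O(\varepsilon k)}$ bad words, which \emph{is} small compared to $e^{k(h_\mu(\sigma)-\varepsilon)}$ with no relation between $h_\mu(\sigma)$ and $h_{\rm top}(\Sigma)$ required. The common-prefix trick and the use of measure bounds in place of crude counting are exactly the missing ideas.
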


\begin{remark}\label{rem-varphi}The function $\varphi$ in proposition~\ref{nov***} is actually not needed in the proof of theorem~\ref{mainthm}. It is needed only in the proof of theorem~\ref{monotone-thm}. \end{remark}

We finalize the construction of seed sets subject to proposition~\ref{nov***}. Let $\Sigma$ be a subshift  
that has a language decomposition $\mathcal L(\Sigma)=\mathcal C^{\rm p}\mathcal G\mathcal C^{\rm s}$ with $h(\mathcal C^{\rm p}\cup\mathcal C^{\rm s})<h_{\rm top}(\Sigma)$ and $\mathcal G$ has (W')-specification with gap size $t$. 
Let $\mu\in\mathcal M(\Sigma,\sigma)$ be ergodic with $h_\mu(\sigma)>h(\mathcal C^{\rm p}\cup\mathcal C^{\rm s})$ and let $\varphi\colon\Sigma\to\mathbb R$ be  continuous. Let $\varepsilon\in(0,h_\mu(\sigma))$ be sufficiently small and 
let $k>3t$ be a sufficiently large integer for which there exist $\mathcal Q_k\subset\mathcal G_k$ and $v^*\in\mathcal G_k\setminus\mathcal Q_k$ as in proposition~\ref{nov***}.
Define 
\[F_k=\left\{v^* w^1v^1w^2v^2\cdots\in\Sigma\colon v^i\in \mathcal Q_{k},\  w^i\in\mathcal L(\Sigma),\ |w^i|\leq t\text{ for }i\in\mathbb N \right\},\]
and call $F_k$ {\it a seed set}.
In other words, elements of $F_k$ are
points in $[v^*]$ that are obtained by
concatenating elements of $\mathcal Q_k$
by words of length not exceeding $t$.
By lemma~\ref{w-implication}, $\mathcal G$ has (W)-specification with gap size $t$.
By virtue of (W)-specification, $F_k$ is non-empty.
proposition~\ref{nov***}(c) implies that any point in $F_k$ is non-recurrent, which is important in subsequent constructions.

\begin{proof}[Proof of proposition~\ref{nov***}]

Let $\Sigma$ be a subshift  
that has a language decomposition $\mathcal L(\Sigma)=\mathcal C^{\rm p}\mathcal G\mathcal C^{\rm s}$ such that $h(\mathcal C^{\rm p}\cup\mathcal C^{\rm s})<h_{\rm top}(\Sigma)$. 
 Let
 $\mu\in \mathcal M(\Sigma,\sigma)$ be ergodic with $h_\mu(\sigma)>h(\mathcal C^{\rm p}\cup\mathcal C^{\rm s})$ and let $\varphi\colon\Sigma\to\mathbb R$ be continuous. 
Let $\varepsilon\in(0,h_\mu(\sigma))$ and $M\geq1$.
The proof of proposition~\ref{nov***} breaks into two steps. In step~1 we construct the subset $\mathcal Q_k$ of $\mathcal G_k$ as in the statement. In step~2 we show that $\mathcal Q_k$ has all the required properties. 
\medskip

\noindent{\bf Step~1:  construction of $\mathcal Q_k$.}
  We use the next lemma in \cite{STY24} that can be proved combining Shannon-McMillan-Breiman's theorem and Birkhoff's ergodic theorem. Recall the notation for a prefix and suffix of a word in
   section~\ref{climen}.
   
\begin{lemma}[{\cite[lemma~2.6]{STY24}}]\label{extract-lem}
Let $\Sigma$ be a subshift, 
 $\mu\in \mathcal M(\Sigma,\sigma)$ be an ergodic measure with positive entropy and let 
 $\varphi\colon\Sigma\to\mathbb R$ be a continuous function. For any $\varepsilon\in(0,h_\mu(\sigma))$, there exists $N\in\mathbb N$ such that for each integer $n\geq N$ 
 there exists a subset $\mathcal P_{n}$ of $\mathcal L_{n}(\Sigma)$ such that:
 \begin{itemize}
\item[(a)] for all $w\in\mathcal P_{n}$,
$
e^{-n(h_\mu(\sigma)+\varepsilon)}\leq
\mu([w])\leq e^{-n(h_\mu(\sigma)-\varepsilon)}$;
\item[(b)] for all $w\in\mathcal P_{n}$ and all $x\in[w]$,
\[\left|\frac{S_{n}\varphi(x)}{n}- \int \varphi d\mu\right|<\varepsilon;
\]
\item[(c)]
$(1/2)e^{(h_\mu(\sigma)-\varepsilon)n}
\leq \#\mathcal P_{n}\leq e^{(h_\mu(\sigma)+\varepsilon)n}$;
\item[(d)] 
if $w\in\mathcal P_{n+1}$ then $w|_{n}^p\in\mathcal P_{n}$.
\end{itemize}
\end{lemma}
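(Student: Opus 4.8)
The plan is to produce a single \emph{good set} $E\subset\Sigma$ of measure bounded away from $0$ on which the two relevant convergences hold uniformly, and then to \emph{define} $\mathcal P_n$ to be the collection of $n$-cylinders that meet $E$. First I would invoke the Shannon--McMillan--Breiman theorem, which gives
\[\lim_{n\to\infty}-\tfrac1n\log\mu([x_1\cdots x_n])=h_\mu(\sigma)\quad\text{for }\mu\text{-a.e. }x,\]
together with Birkhoff's ergodic theorem, which gives $\lim_{n\to\infty}\frac1n S_n\varphi(x)=\int\varphi\,d\mu$ for $\mu$-a.e. $x$ (ergodicity of $\mu$ is what makes both limits the constants $h_\mu(\sigma)$ and $\int\varphi\,d\mu$). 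Applying Egorov's theorem to these two almost-everywhere convergences simultaneously yields a measurable set $E$ with $\mu(E)\geq 3/4$ and an integer $N_0$ such that, for every $n\geq N_0$ and every $x\in E$,
\[\Bigl|-\tfrac1n\log\mu([x_1\cdots x_n])-h_\mu(\sigma)\Bigr|<\varepsilon\quad\text{and}\quad\Bigl|\tfrac1n S_n\varphi(x)-\int\varphi\,d\mu\Bigr|<\tfrac\varepsilon2.\]

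For $n\geq N$, with $N\geq N_0$ fixed below, I would set $\mathcal P_n=\{w\in\mathcal L_n(\Sigma)\colon [w]\cap E\neq\emptyset\}$. The nesting property (d) is then immediate: if $w\in\mathcal P_{n+1}$ and $x\in[w]\cap E$, then $x\in[w|^{\rm p}_n]$ as well, so $[w|^{\rm p}_n]\cap E\neq\emptyset$ and $w|^{\rm p}_n\in\mathcal P_n$. Property (a) follows because, for $w\in\mathcal P_n$, any $x\in[w]\cap E$ satisfies $[x_1\cdots x_n]=[w]$, so the first displayed inequality reads exactly $e^{-n(h_\mu(\sigma)+\varepsilon)}\leq\mu([w])\leq e^{-n(h_\mu(\sigma)-\varepsilon)}$. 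For (c), every point of $E$ lies in some cylinder of $\mathcal P_n$, so $\mu\bigl(\bigcup_{w\in\mathcal P_n}[w]\bigr)\geq\mu(E)\geq 3/4$; combining this with the measure bounds of (a) and with $\sum_{w\in\mathcal P_n}\mu([w])\leq1$ gives $\tfrac12 e^{(h_\mu(\sigma)-\varepsilon)n}\leq\#\mathcal P_n\leq e^{(h_\mu(\sigma)+\varepsilon)n}$, which is (c).

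The one point requiring genuine care is (b), since the Birkhoff control holds only at the single point $x\in[w]\cap E$, whereas $S_n\varphi$ is not constant on $[w]$. Here I would use that $\Sigma$ is compact and $\varphi$ continuous, hence uniformly continuous: given $\varepsilon$, choose $m_0$ so that any two points agreeing on their first $m_0$ coordinates have $|\varphi(x)-\varphi(y)|<\varepsilon/4$. For $x,y\in[w]$ the points $\sigma^i x$ and $\sigma^i y$ agree on at least $n-i$ coordinates, so splitting the Birkhoff sum at $i=n-m_0$ yields
\[\Bigl|\tfrac1n S_n\varphi(x)-\tfrac1n S_n\varphi(y)\Bigr|\leq\frac\varepsilon4+\frac{2m_0\|\varphi\|_\infty}{n}.\]
Choosing $N\geq N_0$ so large that the last term is $<\varepsilon/4$ for all $n\geq N$, the oscillation of $\frac1n S_n\varphi$ across any $n$-cylinder is $<\varepsilon/2$; adding this to the $\varepsilon/2$ bound at $x\in[w]\cap E$ gives $\bigl|\tfrac1n S_n\varphi(y)-\int\varphi\,d\mu\bigr|<\varepsilon$ for every $y\in[w]$, which is (b). I expect the coherence condition (d) to be the main obstacle in any naive level-by-level selection; the device of defining $\mathcal P_n$ through one fixed Egorov set $E$ is precisely what renders (d) automatic while still delivering (a)--(c), and the uniform-continuity estimate is the only quantitative ingredient beyond the two ergodic theorems.
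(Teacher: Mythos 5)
Your proof is correct and takes essentially the route the paper itself indicates: the paper does not prove this lemma but imports it from \cite[Lemma~2.6]{STY24} with the remark that it ``can be proved combining Shannon--McMillan--Breiman's theorem and Birkhoff's ergodic theorem,'' and your construction --- a single Egorov set $E$ on which both almost-everywhere limits hold uniformly, with $\mathcal P_n$ defined as the $n$-cylinders meeting $E$, so that the prefix coherence (d) is automatic and (a)--(c) follow from the uniform bounds, disjointness of same-length cylinders, and the uniform-continuity splitting of the Birkhoff sum --- is exactly the standard realization of that recipe.
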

 For each sufficiently large 
 $n\in\mathbb N$,  let  
 $\mathcal P_{n}$
 denote the subset of $\mathcal L_{n}(\Sigma)$ given by lemma~\ref{extract-lem} with $\varepsilon$ replaced by $\varepsilon^3$. 
For each $w\in\mathcal P_{n}$ we fix a decomposition \[w=p(w)c(w)s(w),\ \
p(w)\in\mathcal C^{\rm p}, c(w)\in \mathcal G, s(w)\in\mathcal C^{\rm s}.\] 
 Set $\underline{h}(\mu)=(h_\mu(\sigma)-h(\mathcal C^{\rm p}\cup\mathcal C^{\rm s}))/2.$ 
From the proof of \cite[proposition~2.4]{STY24},
there exist $k$, $k_0\in\mathbb N$ and 
$p_0$, $s_0\in\mathbb N\cup\{0\}$
such that 
\begin{equation}\label{m-eq0}k=k_0-p_0-s_0,\ p_0+s_0\leq\frac{2\varepsilon^2 k_0}{\underline{h}(\mu)},\ M<\left(1-\frac{2\varepsilon^2}{\underline{h}(\mu)}\right)k_0\leq k \ \text{ and }\end{equation}
\begin{equation}\label{m-eq1}
\frac{\log\#\{c(w)\colon w\in\mathcal P_{k_0},\ |p(w)|=p_0,\ |s(w)|=s_0 \} }{k} 
> h_\mu(\sigma)-\varepsilon.
\end{equation}
We fix $u^*\in\mathcal L_{p_0}(\Sigma)$ such that \[\frac{\#\{c(w)\colon w\in\mathcal P_{k_0},\ p(w)=u^*,\ |s(w)|=s_0 \}}{\#\{c(w)\colon w\in\mathcal P_{k_0},\ |p(w)|=p_0,\ |s(w)|=s_0 \}}\geq\frac{1 }{\#\mathcal L_{p_0}(\Sigma) },\]
and define the set in the numerator to be $\mathcal Q_k(u^*).$
We fix $v^*\in\mathcal Q_{k}(u^*)$, and
for $i=1,\ldots,k$ let
 \[\begin{split}\mathcal Q_{k,i}^p&=\{v\in\mathcal Q_{k}(u^*)\colon
v|^s_{k-i+1}=v^*|^p_{k-i+1}\},\\
\mathcal Q_{k,i}^s&=\{v\in\mathcal Q_{k}(u^*)\colon
 v|^p_{k-i+1}=v^*|^s_{k-i+1}\},\end{split}\]
 and define a subset $\mathcal Q_k$ of $\mathcal G_k$ by
\[\mathcal Q_{k}=\mathcal Q_{k}(u^*)\setminus\bigcup_{i=1}^{\lfloor 2k/3+1\rfloor} (\mathcal Q_{k,i}^p\cup \mathcal Q_{k,i}^s).\]
Since $\{v^*\}=\mathcal Q_{k,1}^p=\mathcal Q_{k,1}^s$ we have
$v^*\notin\mathcal Q_{k}$. 
\medskip

\noindent{\bf Step~2:  verification of the required properties.} We have
\begin{equation}\label{seed-eq2}\begin{split} \#\mathcal Q_{k}&\geq \#\mathcal Q_{k}(u^*)- \sum_{i=1}^{\lfloor 2k/3+1\rfloor}  \#(\mathcal Q_{k,i}^p\cup  \mathcal Q_{k,i}^s).\end{split}\end{equation}
Let $2\leq i\leq \lfloor 2k/3+1\rfloor$ be an integer. 
For each $v\in\mathcal Q_k(u^*)$, there exists $w\in\mathcal P_{k_0}$ such that $s(w)\in\mathcal L_{s_0}(\Sigma)$ and
$w=u^*vs(w)$. By 
lemma~\ref{extract-lem}(d), the image of the injection 
 $v\in\mathcal Q_{k,i}^p\mapsto u^*(v|^p_{i-1})\in\mathcal L_{p_0+i-1}(\Sigma)$ is contained in $\mathcal P_{p_0+i-1}$. The second inequality of lemma~\ref{extract-lem}(c) yields
\begin{equation}\label{esti1}\#\mathcal Q_{k,i}^p\leq \#\mathcal P_{p_0+i-1}\leq e^{(h_\mu(\sigma)+\varepsilon)(p_0+i-1)}.\end{equation}
 If $v\in \mathcal Q_{k,i}^s$ then 
lemma~\ref{extract-lem}(d) gives
$u^*(v|^p_{k-i+1})=u^*(v^*|^s_{k-i+1})\in\mathcal P_{p_0+k-i+1}$
and $u^*v\in\mathcal P_{k_0-s_0}$.
This implies
\begin{equation}\label{esti2}
\begin{split}\#\mathcal Q_{k,i}^s\leq\frac{\mu([u^*
(v^*|^s_{k-i+1} )])}{\min\{\mu([u^*v])\colon v\in\mathcal Q^s_{k,i} \}}&\leq
\frac{e^{-(k-i+1)(h_\mu(\sigma)-\varepsilon)}}{e^{-k(h_\mu(\sigma)+\varepsilon) }}\\
&=e^{2\varepsilon k}e^{(i-1)(h_\mu(\sigma)-\varepsilon)}.\end{split}\end{equation}
Combining \eqref{esti1}, \eqref{esti2} and
$\#\mathcal Q_{k,1}^p=\#\mathcal Q_{k,1}^s=1$ yields
\[\sum_{i=1}^{\lfloor 2k/3+1\rfloor}\#(\mathcal Q_{k,i}^p\cup \mathcal Q_{k,i}^s)\leq (e^{2\varepsilon k}+1)\cdot\frac{e^{(2k/3)(h_\mu(\sigma)+\varepsilon)}-1}{e^{h_\mu(\sigma)+\varepsilon}-1}.\]
Substituting this inequality into \eqref{seed-eq2} and using \eqref{m-eq1} to estimate the first term in \eqref{seed-eq2}, we
 obtain the inequality in (a) for all sufficiently large $k$.

Let $\|\varphi\|=\sup_{x\in \Sigma}|\varphi(x)|$.
For any $w\in\mathcal P_{k_0}$ such that $c(w)\in\mathcal Q_k$,
take 
$y\in [w]$. 
Using the equality and the first inequality in \eqref{m-eq0}, 
we have
\[\left|\frac{S_{k}\varphi(\sigma^{p_0}y)}{k}-\frac{S_{k_0}\varphi(y)}{k}\right|\leq\frac{p_0+s_0}{k}\|\varphi\|\leq\frac{2\varepsilon^2 k_0 }{\underline{h}(\mu)k }\|\varphi\|,\]
and
by lemma~\ref{extract-lem}(b)  with $\varepsilon$ replaced by $\varepsilon^3$, 
\[\left|\frac{S_{k_0}\varphi(y)}{k}-\int \varphi d\mu\right|\leq\frac{k_0\varepsilon^3}{k}+\frac{k_0-k}{k}
\|\varphi\|.\]
Combining these two inequalities and  using the last inequality in \eqref{m-eq0}, we have
\[ \begin{split}\left|\frac{S_{k}\varphi(\sigma^{p_0}y)}{k}-\int \varphi d\mu\right|
< \frac{\varepsilon}{2}\end{split}\]
provided 
 $\varepsilon$ is sufficiently small depending on $\mu$ and $\varphi$.
Since $\varphi$ is uniformly continuous, if $k_0$ is sufficiently large then for all $x\in[c(w)]$ we have
\[\left|\frac{S_{k}\varphi(x)}{k}-\frac{S_{k}\varphi(\sigma^{p_0}y)}{k}\right|<\frac{\varepsilon}{2}.\]
Combining these two inequalities, we obtain 
\[ \left|\frac{S_{k}\varphi(x)}{k}- \int \varphi d\mu\right| < \varepsilon\ \text{ for all $x\in[c(w)]$,}\]
 which implies (b).
\medskip

Let $t\in\mathbb N\cup\{0\}$, 
 $w\in\mathcal L(\Sigma)$ satisfy $|w|\leq t<k/3$. Let
 $u,v\in\mathcal Q_k$ and 
 suppose $v^*$ is a subword of $uwv$. 
Since $|u|=|v|=|v^*|=k$, there would exist integers $i\in[2,k]$, $j\in[1,k-1]$ such that
$v^*=u|^s_{k-i+1}w(v|^p_j)$. We would have
$u\in\mathcal Q_{k,i}^p$ and $v\in\mathcal Q_{k,k-j+1}^s$, and moreover
 $k-i+1> k/3$ or $j> k/3$.
By the definition of $\mathcal Q_k$, the first inequality would yield $u\notin\mathcal Q_k$ and the second one would yield $v\notin\mathcal Q_k$. 
We have verified (c).
\end{proof}

\subsection{Construction of subsets of the recurrence set}\label{const-sec} 
Throughout this and the next subsections, 
let $\Sigma$ be a subshift  
that has a language decomposition $\mathcal L(\Sigma)=\mathcal C^{\rm p}\mathcal G\mathcal C^{\rm s}$ with $h(\mathcal C^{\rm p}\cup\mathcal C^{\rm s})<h_{\rm top}(\Sigma)$ and $\mathcal G$ has (W')-specification with gap size $t$.
Let $\mu\in\mathcal M(\Sigma,\sigma)$ be ergodic with $h_\mu(\sigma)>h(\mathcal C^{\rm p}\cup\mathcal C^{\rm s})$, let $\varphi\colon\Sigma\to\mathbb R$ be a continuous function,
and let $k>\max\{3t,1\}$ be a sufficiently large integer for which there exist $\mathcal Q_k\subset\mathcal G_k$ and $v^*\in\mathcal G_k\setminus\mathcal Q_k$ as in proposition~\ref{nov***}.

Let $a,b\in[0,\infty]$ satisfy $a\leq b$.
We construct a subset of the recurrence set $R_{\sigma,\xi}(a,b)$ by
modifying the seed set $F_k$ constructed in section~\ref{seed-sec}.
Let 
$(\ell_p)_{p=1}^\infty$,  $(\gamma_p)_{p=1}^\infty$ be sequences of positive reals with the properties in lemma~\ref{computational}.
By virtue of (b) and (c) in lemma~\ref{computational},
shifting the indices if necessary
 we may assume \begin{equation}\label{n-diff}\ell_{p+1}-\ell_p\geq k+t\ \text{ for all }p\in\mathbb N\ \text{ and }\end{equation}
\begin{equation}\label{gam}e^{\gamma_1\ell_1}\geq2k+t.\end{equation}

Let $x\in F_k$. Recall that $x=v^*(x) w^1(x)v^1(x)w^2(x)v^2(x)\cdots$
 with $v^i(x)\in \mathcal Q_{k}$,
 $w^i(x)\in\mathcal L(\Sigma)$ and $|w^i(x)|\leq t$
  for $i\in\mathbb N$. By lemma~\ref{computational}(e) and \eqref{gam}, we have
  \begin{equation}\label{gamII}e^{\gamma_{p}\ell_p}-e^{\gamma_{p-1}\ell_{p-1}}\geq
  e^{\gamma_{p-1}\ell_{p-1}}(e-1)\geq e^{\gamma_{1}\ell_{1}}(e-1)>k+t\ \text{ for all }p\geq2.\end{equation}
 We fix a strictly increasing sequence $(M_p(x))_{p=1}^\infty$ of positive integers such that 
\begin{equation}\label{relation3}
\begin{split}0&<e^{\gamma_{p}\ell_p}-|v^*w^1(x)
v^1(x)\cdots w^{M_p(x)-1}(x)v^{M_p(x)-1}(x)|\\
&\leq|w^{M_p(x)}(x) v^{M_p(x)}(x)|.\end{split}\end{equation}
By \eqref{gam} and \eqref{gamII}, $(M_p(x))_{p=1}^\infty$ is well-defined.
We construct a sequence $(y_q(x))_{q=0}^\infty$ of words in $\mathcal L(\Sigma)$ by induction. Start with $y_{0}(x)=v^*$. 
Suppose $q=0$, or else $q\geq1$ and
 $y_{q}(x )=v^*\cdots v^{q}(x)$ is a concatenation of $v^*$ and elements of $\mathcal Q_k$ by words of lengths not exceeding $t$. There are two cases.
 \medskip

\noindent {\bf Case 1:} $q\neq M_p(x)$ for all $p\in\mathbb N$.
  By the (W')-specification for $\mathcal G$ with gap size $t$, there is $w\in\mathcal L(\Sigma)$ such that 
 $y_{q}(x)wv^{q+1}(x)\in\mathcal L(\Sigma)$ and $|w|\leq t$. Define
\[y_{q+1}(x)=y_{q}(x)wv^{q+1}(x).\]

\medskip

\noindent {\bf Case 2:} $q=M_p(x)$ for some $p\in\mathbb N$.
Take a prefix $\theta^p(x)$ of $y_{q}(x)$
such that 
\begin{equation}\label{insert-ww}  k\leq|\theta^p(x)|,\ 
 \theta^p(x)|^s_k\in\mathcal Q_k\ \text{ and }\end{equation}
\begin{equation}\label{insert-w}0\leq |\theta^p(x)|- \ell_p<k+t.\end{equation}
Choose $\lambda^p(x)\in\mathcal Q_k$
such that for any $w\in\mathcal L(\Sigma)$
with $\theta^p(x)w\lambda^p(x)\in\mathcal L(\Sigma)$ and $|w|\leq t$, 
$\theta^p(x)w\lambda^p(x)$ is not a prefix of 
$y_{q}(x)$.
This choice is clearly feasible if $k$ is sufficiently large compared to $t$.
By the (W')-specification for $\mathcal G$ with gap size $t$, there are $w^{p,1},$ $w^{p,2}$, $w^{p,3}\in\mathcal L(\Sigma)$ such that 
$y_{q}(x)w^{p,1} \theta^p(x)w^{p,2}\lambda^p(x)w^{p,3}
v^{q+1}\in\mathcal L(\Sigma)$
and $|w^{p,i}|\leq t$ for $i=1,2,3$.
We set \[y_{q+1}(x)=y_{q}(x)w^{p,1} \theta^p(x)w^{p,2}\lambda^p(x)w^{p,3}
v^{q+1}.\]

Eligible choices of the connecting word $w$ in case~1 and that of $w^{p,1}$, $w^{p,2}$, $w^{p,3}$, $\lambda^p(x)$ in case~2 are not unique.
To avoid any ambiguity, we set a definite selection rule right from the start: introduce total orders in $\bigcup_{n=0}^t\mathcal L_n(\Sigma)$ and $\mathcal Q_k$, and choose from those eligible words the ones that are minimal with respect to these orders.
This completes the construction of the sequence $(y_q(x))_{q=0}^\infty$.
Let $y(x)$ denote the element of the singleton $\bigcap_{q=0}^\infty[y_q(x)]$. Clearly we have $y(x)\in\Sigma$. Let $\Lambda_{k}(a,b)$ denote the collection of these points, namely
\[\Lambda_k(a,b)=\{y(x)\in \Sigma\colon x\in F_k\}.\]

\begin{lemma}\label{tau-exp}For all $x\in F_k$, we have
\[\lim_{p\to\infty}\frac{\log\tau_{\xi_{|\theta^p(x)|}(y(x) )}(y(x) )}{\gamma_{p}\ell_p}=1.\]\end{lemma}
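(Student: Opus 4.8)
The plan is to pin down the first return time to within a bounded factor and then read off the logarithm. Since $\theta^p(x)$ is a prefix of $y(x)$, the cylinder $\xi_{|\theta^p(x)|}(y(x))$ coincides with $[\theta^p(x)]$, so $\tau_{\xi_{|\theta^p(x)|}(y(x))}(y(x))$ is exactly the least shift $j\geq1$ at which the prefix $\theta^p(x)$ reappears in $y(x)$. By Lemma~\ref{computational}(e), $\gamma_p\ell_p\geq\gamma_1\ell_1+(p-1)\to\infty$, so it is enough to show that this return time $\tau_p$ satisfies $\log\tau_p=\gamma_p\ell_p+O(1)$ with the implied constant independent of $p$; dividing by $\gamma_p\ell_p$ then yields the limit $1$.

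For the upper bound I would use the deliberately inserted copy. At step $q=M_p(x)$ the construction produces $y_{M_p(x)+1}(x)=y_{M_p(x)}(x)\,w^{p,1}\,\theta^p(x)\,w^{p,2}\,\lambda^p(x)\cdots$, so $\theta^p(x)$ reappears starting at position $|y_{M_p(x)}(x)|+|w^{p,1}|+1$, whence $\tau_p\leq|y_{M_p(x)}(x)|+t$. It then remains to estimate $|y_{M_p(x)}(x)|$. This word is built from the $M_p(x)+1$ blocks $v^*,v^1(x),\dots,v^{M_p(x)}(x)$ of length $k$, the connecting words of length $\leq t$, and the segments inserted at the earlier steps $q=M_{p'}(x)$ with $p'<p$. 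By \eqref{relation3} the block-and-gap portion inherited from the seed has length $e^{\gamma_p\ell_p}+O(k+t)$; replacing the seed gaps by the new connecting words changes the total by at most $tM_p(x)\leq (t/k)e^{\gamma_p\ell_p}<e^{\gamma_p\ell_p}/3$ (using $t<k/3$); and the segments inserted for $p'<p$ contribute $O(\sum_{p'=1}^{p-1}(\ell_{p'}+k))$, which is $o(e^{\gamma_p\ell_p})$ by Lemma~\ref{computational}(f). Hence $|y_{M_p(x)}(x)|$ and $e^{\gamma_p\ell_p}$ differ by a bounded multiplicative factor, so $\log|y_{M_p(x)}(x)|=\gamma_p\ell_p+O(1)$, giving the upper bound on $\tau_p$.

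The lower bound is the crux: I must show that $\theta^p(x)$ does not reappear before the inserted copy, so that $\tau_p\geq|y_{M_p(x)}(x)|$. Since $\theta^p(x)$ begins with $v^*$, any occurrence forces an occurrence of $v^*$, and the decisive tool is Proposition~\ref{nov***}(c). Because $v^*$ is never a subword of $uwv$ for $u,v\in\mathcal Q_k$ and $|w|\leq t<k/3$, no copy of $v^*$ can straddle a boundary between consecutive $\mathcal Q_k$-blocks, and, as $v^*\notin\mathcal Q_k$, none can be block-aligned; the same applies at the boundaries adjacent to the inserted copies, since each such copy begins with $v^*$ and, by \eqref{insert-ww}, ends with a $\mathcal Q_k$-block. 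This confines the candidate positions to the initial position $1$ and the starts of the inserted prefix-copies $\theta^{p'}(x)$ (together with the copies these inherit recursively, each $\theta^{p'}(x)$ being itself a prefix of $y(x)$). For $p'<p$ the copy $\theta^{p'}(x)$ is strictly shorter than $\theta^p(x)$, because $\ell_{p'}<\ell_p$ by \eqref{n-diff} and $|\theta^{p'}(x)|<\ell_{p'}+k+t\leq\ell_p\leq|\theta^p(x)|$ by \eqref{insert-w}; thus a putative match of $\theta^p(x)$ at such a position must extend past $\theta^{p'}(x)$ into $w^{p',2}\lambda^{p'}(x)$, where it fails, because $\lambda^{p'}(x)$ was chosen so that $\theta^{p'}(x)w\lambda^{p'}(x)$ is not a prefix of the word constructed so far for any $|w|\leq t$. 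Since only the limit $p\to\infty$ matters, Lemma~\ref{computational}(b) makes every gap $\ell_p-\ell_{p'}$ as large as needed, so the match always reaches far enough to detect the mismatch. Thus the earliest reoccurrence is the inserted copy, and $\log\tau_p=\gamma_p\ell_p+O(1)$.

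I expect the main obstacle to be precisely this combinatorial lower bound: controlling the nested hierarchy of inserted prefix-copies and verifying that Proposition~\ref{nov***}(c) together with the selection of the $\lambda^{p'}(x)$ rules out every spurious occurrence of $\theta^p(x)$, including the delicate straddling cases at the boundaries next to $v^*$ and to the long inserted copies. Once the first reoccurrence is identified with the inserted copy, the two bounds and $\gamma_p\ell_p\to\infty$ together give the stated limit.
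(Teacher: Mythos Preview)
Your approach is essentially the same as the paper's. The paper's proof simply asserts the identity $\tau_{\xi_{|\theta^p(x)|}(y)}(y)=|y_{M_p(x)}(x)\,w^{p,1}|$ (i.e., the first return is exactly at the deliberately inserted copy) and then sandwiches this quantity between $e^{\gamma_p\ell_p}$ and $e^{\gamma_p\ell_p}+k+t+\sum_{j<p}|w^{j,1}\theta^j(x)w^{j,2}\lambda^j(x)w^{j,3}|$, invoking \eqref{relation3} for the first comparison and Lemma~\ref{computational}(f) for the sum. Your combinatorial lower-bound sketch via Proposition~\ref{nov***}(c) and the choice of the blocks $\lambda^{p'}(x)$ is precisely the justification of that asserted identity which the paper leaves to the reader; your upper-bound estimate follows the same route, and your invocation of Lemma~\ref{computational}(b) to make $|\theta^p(x)|-|\theta^{p'}(x)|$ large enough is exactly what is needed to push a putative match past $w^{p',2}$ into $\lambda^{p'}(x)$.

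One minor remark: your explicit accounting for the gap-replacement term $tM_p(x)\leq (t/k)e^{\gamma_p\ell_p}$ is actually more careful than the paper's displayed additive bound (the new connecting words in $y$ need not have the same lengths as the seed's $w^i(x)$), but since this only introduces a bounded multiplicative factor, both versions give $\log\tau_p=\gamma_p\ell_p+O(1)$ and hence the same limit.
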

\begin{proof}
By \eqref{n-diff} and \eqref{insert-w}, the sequence $(\theta^p(x))_{p=1}^\infty$ 
is strictly monotone increasing and
\[\tau_{\xi_{|\theta^p(x)|}(y(x) )}(y(x) )=|y_{ M_p(x) }(x)w^{p,1}|.\]
From this and \eqref{relation3}, 
for all $p\geq2$ we have 
\[\begin{split}e^{\gamma_{p}\ell_p}\leq
\tau_{\xi_{|\theta^p(x)|}(y(x) )}(y(x) )\leq& 
e^{\gamma_{p}\ell_p}+k+t+\sum_{j=1}^{p-1} |w^{j,1}\theta^j(x)w^{j,2}
\lambda^j(x)w^{j,3}|\\
\leq& e^{\gamma_{p}\ell_p} +k+t+\sum_{j=1}^{p-1} |\theta^j(x)|+(p-1)(k+3t)\\
\leq& e^{\gamma_{p}\ell_p}+k+t+\sum_{j=1}^{p-1} \ell_j+(p-1)(2k+4t).
\end{split}\]
To deduce the last inequality we have used \eqref{insert-w}. The sum in the last line is bounded from above by 
 lemma~\ref{computational}(f) with $c=2k+4t$. Then, for any $\eta>0$ there exists $p_0\geq2$ such that for all $p\geq p_0$ we have
 \[e^{\gamma_{p}\ell_p}\leq
\tau_{\xi_{|\theta^p(x)|}(y(x) )}(y(x) )\leq(1+\eta)e^{\gamma_{p}\ell_p}+k+t,\]
which implies the desired equality.
 \end{proof}

\begin{lemma}\label{contained}
We have
$\Lambda_k(a,b)\subset R_{\sigma,\xi}(a,b)$.
\end{lemma}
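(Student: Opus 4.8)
The plan is to upgrade the subsequential estimate of Lemma~\ref{tau-exp} to full $\liminf$ and $\limsup$ statements, using the monotonicity of the return times together with the asymptotics in Lemma~\ref{computational}. Fix $x\in F_k$ and write $y=y(x)$ and $L_n=\log\tau_{\xi_n(y)}(y)/n$. Two preliminary observations drive everything. First, since $\xi_{n+1}(y)\subset\xi_n(y)$, the sequence $n\mapsto\tau_{\xi_n(y)}(y)$ is non-decreasing. Second, by \eqref{insert-w} we have $\ell_p\le|\theta^p(x)|<\ell_p+k+t$, and $\ell_p\to\infty$ by Lemma~\ref{computational}(b), so $|\theta^p(x)|=\ell_p(1+o(1))$; hence Lemma~\ref{tau-exp} reads $\log\tau_{\xi_{|\theta^p(x)|}(y)}(y)=\gamma_p\ell_p(1+o(1))$ and therefore $L_{|\theta^p(x)|}=\gamma_p(1+o(1))$. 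By Lemma~\ref{computational}(a) the even indices give $L_{|\theta^{2p}(x)|}\to b$ and the odd indices give $L_{|\theta^{2p+1}(x)|}\to a$, so already $\limsup_{n\to\infty}L_n\ge b$ and $\liminf_{n\to\infty}L_n\le a$. It remains to prove the two reverse inequalities.

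For a large $n$, let $p=p(n)$ be the unique index with $|\theta^p(x)|\le n<|\theta^{p+1}(x)|$; since $(|\theta^p(x)|)_p$ is strictly increasing (by \eqref{n-diff} and \eqref{insert-w}) we have $p(n)\to\infty$. Monotonicity sandwiches
\[\gamma_p\ell_p(1+o(1))\le\log\tau_{\xi_n(y)}(y)\le\gamma_{p+1}\ell_{p+1}(1+o(1)).\]
Dividing by $n$ and using $\ell_p(1+o(1))\le n\le\ell_{p+1}(1+o(1))$, the upper estimate gives $L_n\le(\gamma_{p+1}\ell_{p+1}/\ell_p)(1+o(1))$, so $\limsup_{n\to\infty}L_n\le b$ by Lemma~\ref{computational}(d); combined with the even subsequence this yields $\limsup_{n\to\infty}L_n=b$.

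The crux is the lower bound $\liminf_{n\to\infty}L_n\ge a$, i.e.\ controlling $L_n$ for $n$ in the interior of each block $[|\theta^p(x)|,|\theta^{p+1}(x)|)$. When $b<\infty$ this already follows from the sandwich: the lower estimate gives $L_n\ge(\gamma_p\ell_p/\ell_{p+1})(1+o(1))$, and fixing $\varepsilon>0$ and using Lemma~\ref{computational}(d) in the form $\ell_{p+1}\le(b+\varepsilon)\ell_p/\gamma_{p+1}$ for all large $p$ yields $L_n\ge(\gamma_p\gamma_{p+1}/(b+\varepsilon))(1+o(1))$; as consecutive $\gamma_p,\gamma_{p+1}$ tend to $a$ and $b$ in some order we have $\gamma_p\gamma_{p+1}\to ab$, so $\liminf_{n\to\infty}L_n\ge ab/(b+\varepsilon)$, and letting $\varepsilon\to0$ gives $\liminf_{n\to\infty}L_n\ge a$ (the case $b=0$, which forces $a=0$, being trivial). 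The delicate case is $b=\infty$, where Lemma~\ref{computational}(d) is vacuous, $\ell_{p+1}/\ell_p$ is uncontrolled, and monotonicity is too weak. Here I would instead exploit the geometry of the construction: $\lambda^p(x)$ was chosen so that the reinserted copy of $\theta^p(x)$ at depth $\approx e^{\gamma_p\ell_p}$ cannot be extended to a prefix of $y$ within $k+t$ symbols. Consequently, for $n>|\theta^p(x)|+k+t$ the length-$n$ prefix does not recur at that depth; as it begins with $v^*$, which by Proposition~\ref{nov***}(c) never occurs inside a concatenation of $\mathcal Q_k$-words, and as the next reinsertion is that of $\theta^{p+1}(x)$, the prefix cannot reappear before depth $\approx e^{\gamma_{p+1}\ell_{p+1}}$, whence $\log\tau_{\xi_n(y)}(y)\ge\gamma_{p+1}\ell_{p+1}(1-o(1))$ and $L_n\ge\gamma_{p+1}(1-o(1))$; for the remaining $n$ within $k+t$ of $|\theta^p(x)|$ one has $n=\ell_p(1+o(1))$ and the sandwich already gives $L_n\ge\gamma_p(1-o(1))$. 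In both ranges $L_n\ge\min(\gamma_p,\gamma_{p+1})(1-o(1))$, and since $\min(\gamma_p,\gamma_{p+1})\to a$ we obtain $\liminf_{n\to\infty}L_n\ge a$. Putting the inequalities together shows $y\in R_{\sigma,\xi}(a,b)$ for every $x\in F_k$, which is the claim.

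I expect the $b=\infty$ case of the lower bound to be the main obstacle, as it is the only point where the computational lemma does not close the argument and one must revisit exactly where the prefixes of $y$ can recur. This is the same non-recurrence mechanism — the choice of $\lambda^p(x)$ together with Proposition~\ref{nov***}(c) — that already underlies the identity $\tau_{\xi_{|\theta^p(x)|}(y)}(y)=|y_{M_p(x)}(x)w^{p,1}|$ invoked in Lemma~\ref{tau-exp}, so the required structural input should be available from the construction.
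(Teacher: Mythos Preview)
Your argument is correct, and for the $\limsup$ upper bound and the $\liminf$ lower bound when $b<\infty$ it is genuinely softer than the paper's: you get by with nothing more than monotonicity of $n\mapsto\tau_{\xi_n(y)}(y)$ and Lemma~\ref{computational}(d), whereas the paper establishes and uses the finer fact that this map is actually a step function. Specifically, the paper partitions $[|\theta^p(x)|,|\theta^{p+1}(x)|)$ into $N_1(p)=\{|\theta^p(x)|,\ldots,s_p(x)\}$ (of length at most $k+t$) and $N_2(p)$, shows $\tau_{\xi_n(y)}(y)$ equals $\tau_{\xi_{|\theta^p(x)|}(y)}(y)$ on $N_1(p)$ and $\tau_{\xi_{|\theta^{p+1}(x)|}(y)}(y)$ on $N_2(p)$, and reads off both the $\liminf$ and $\limsup$ from the resulting two-sided bounds \eqref{equation-II}--\eqref{equation-III} uniformly in $(a,b)$. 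Your case split trades that uniformity for economy: when $b<\infty$ the clever inequality $\gamma_p\ell_p/\ell_{p+1}\ge\gamma_p\gamma_{p+1}/(b+\varepsilon)\to ab/(b+\varepsilon)$ avoids any recurrence analysis. What your hybrid approach buys is a clear identification of exactly where the structural input (the role of $\lambda^p(x)$ and Proposition~\ref{nov***}(c)) is indispensable --- only the $\liminf\ge a$ bound when $b=\infty$.

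For that remaining case your sketch is precisely the paper's mechanism: once $n>s_p(x)$ the length-$n$ prefix of $y$ has outrun the inserted copy $\theta^p(x)w^{p,2}\lambda^p(x)$ and, since every other occurrence of $v^*$ is blocked by Proposition~\ref{nov***}(c), the next return is the $(p+1)$-st insertion, giving $\tau_{\xi_n(y)}(y)=\tau_{\xi_{|\theta^{p+1}(x)|}(y)}(y)$ exactly. To make your sketch rigorous you would formalize this as the paper does (the inductive verification of the step-function identity), so in that case the two proofs coincide.
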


\begin{proof}
Let $x\in F_k$. 
Combining \eqref{insert-w}, lemma~\ref{computational}(a)  and lemma~\ref{tau-exp}  we get
\begin{equation}\label{np-lem-eq1}\lim_{p\to\infty}
\frac{\log\tau_{\xi_{|\theta^{2p+1}(x)|}(y(x))}(y(x) )}{|\theta^{2p+1}(x)|}= a
\ \text{
and }\ \lim_{p\to\infty}\frac{\log
\tau_{\xi_{|\theta^{2p}(x)|}(y(x) )}(y(x) )}{|\theta^{2p}(x)|}=b.\end{equation} 
Similarly, combining \eqref{insert-w}, lemma~\ref{computational}(a), (d) and lemma~\ref{tau-exp} we get
\begin{equation}\label{equation-I}\limsup_{p\to\infty}\frac{\log\tau_{\xi_{|\theta^{p+1}(x)|}(y(x) )}(y(x) )}{|\theta^{p}(x)|}=\limsup_{p\to\infty}\frac{\gamma_{p+1}\ell_{p+1}}{\ell_p}\leq b.\end{equation}

For each $p\in\mathbb N$ define
\[N_1(p)=\{|\theta^p(x)|,\ldots,s_p(x)\}\ \text{ and } \ N_2(p)=\{s_p(x)+1,\ldots,|\theta^{p+1}(x)|-1\},\]
where
\[s_p(x)=\max\{i\geq|\theta^p(x)|\colon(\theta^p(x)w^{p,2 }\lambda^p(x))|^{\rm p}_i\text{ is a prefix of } y_{M_p(x) }(x)\}.\] 
By the choice of $\lambda^p(x)$, the number $s_p(x)$ is well-defined
and satisfies \[|\theta^p(x)|\leq s_p(x)< |\theta^p(x)w^{p,2}\lambda^p(x)|\leq|\theta^p(x)|+k+t.\]
By the inequality in \eqref{insert-ww} and $k>1$, $\theta^p(x)\geq2$ holds.
Notice that \[\mathbb N\setminus\{1,\ldots,|\theta^1(x)|-1\}=\bigsqcup_{p\in\mathbb N}(N_1(p)\bigsqcup N_2(p)).\]
Using proposition~\ref{nov***}(c) and the strict monotonicity of 
$(\theta^p(x))_{p=1}^\infty$, one can check by induction that 
\[\tau_{\xi_{n}(y(x) )}(y(x) )= \begin{cases}\tau_{\xi_{|\theta^1(x)|}(y(x)  )}(y(x) )\ &\text{ if }n\in\{1,\ldots, |\theta^1(x)|-1\},\\
\tau_{\xi_{|\theta^p(x)|}(y(x)  )}(y(x) ) \ &\text{ if }n\in N_1(p)\text{ for some }p\in\mathbb N,\\
\tau_{\xi_{|\theta^{p+1}(x)|}(y(x)  )}(y(x) ) \ &\text{ if }n\in N_2(p)\text{ for some }p\in\mathbb N.\end{cases}\]
Hence,
 if $n\in N_1(p)$ for some $p\in\mathbb N$ then 
\begin{equation}\label{equation-II}\frac{\log\tau_{\xi_{|\theta^{p}(x)|}(y(x) )}(y(x) )}{|\theta^{p}(x)|+k+t}\leq\frac{\log\tau_{\xi_{n}(y(x) )}(y(x) )}{n}\leq \frac{\log\tau_{\xi_{|\theta^{p}(x)|}(y(x) )}(y(x) )}{|\theta^{p}(x)|}.\end{equation}
Similarly, if $n\in N_2(p)$ for some $p\in\mathbb N$ then 
\begin{equation}\label{equation-III}\frac{\log\tau_{\xi_{|\theta^{p+1}(x)|}(y(x) )}(y(x) )}{|\theta^{p+1}(x)|}\leq\frac{\log\tau_{\xi_{n}(y(x) )}(y(x) )}{n}\leq \frac{\log\tau_{\xi_{|\theta^{p+1}(x)|}(y(x) )}(y(x) )}{|\theta^{p}(x)|}.\end{equation}
Combining \eqref{np-lem-eq1}, \eqref{equation-I}, \eqref{equation-II} and \eqref{equation-III} 
we obtain
\[ \liminf_{n\to\infty}\frac{\log\tau_{\xi_{n}(y(x) )}(y(x) )}{n}=a\ \text{ and }\ \limsup_{n\to\infty}\frac{\log\tau_{\xi_{n}(y(x) )}(y(x) )}{n}=b,\]
namely  $y(x)\in R_{\sigma,\xi}(a,b)$. Since $x\in F_k$ is arbitrary, we obtain $\Lambda_k(a,b)\subset R_{\sigma,\xi}(a,b)$ as required. \end{proof}

\subsection{Construction of Moran fractals}\label{moran-sec}
We continue the same setting as in section~\ref{const-sec}.

\begin{lemma}\label{upper-lem} 
There is $q(\varepsilon)\geq1$ such that for all  $x\in F_k$ and all $q\geq q(\varepsilon)$, we have \[|y_{q+1}(x)|\leq (1+\varepsilon)(k+t)(q+2).\] \end{lemma}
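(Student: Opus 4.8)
The plan is to track $|y_{q+1}(x)|$ by summing the length increments over the inductive steps $j=0,1,\ldots,q$ and isolating the contribution of the insertion steps (Case~2). Starting from $|y_0(x)|=|v^*|=k$, each Case~1 step increases the length by $|w|+|v^{j+1}(x)|\le k+t$, while a Case~2 step at $j=M_p(x)$ increases it by $|w^{p,1}\theta^p(x)w^{p,2}\lambda^p(x)w^{p,3}v^{q+1}|\le |\theta^p(x)|+2k+3t$, where I used $|\lambda^p(x)|=|v^{q+1}|=k$ and $|w^{p,i}|\le t$. Using \eqref{insert-w} to bound $|\theta^p(x)|<\ell_p+k+t$, and writing $P(q)=\#\{p\in\mathbb N\colon M_p(x)\le q\}$ for the number of Case~2 steps performed up to step $q$, I split off the base increment $k+t$ at each of the $\le q+1$ steps and keep only the Case~2 excess, obtaining
\[|y_{q+1}(x)|\le k+(q+1)(k+t)+\sum_{p=1}^{P(q)}\bigl(\ell_p+2k+3t\bigr).\]
The task is then to show that this sum is negligible compared with the linear term $(q+1)(k+t)$, uniformly in $x\in F_k$.

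The key is a uniform lower bound on $M_p(x)$. The word $v^*w^1(x)v^1(x)\cdots w^{M_p(x)-1}(x)v^{M_p(x)-1}(x)$ consists of $v^*$ together with $M_p(x)-1$ blocks $w^i(x)v^i(x)$, each of length at most $k+t$; combined with the defining relation \eqref{relation3} this gives
\[e^{\gamma_p\ell_p}\le |v^*w^1(x)v^1(x)\cdots w^{M_p(x)-1}(x)v^{M_p(x)-1}(x)|+|w^{M_p(x)}(x)v^{M_p(x)}(x)|\le k+M_p(x)(k+t),\]
hence $M_p(x)\ge (e^{\gamma_p\ell_p}-k)/(k+t)$, a bound independent of $x$. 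Consequently $M_p(x)\le q$ forces $e^{\gamma_p\ell_p}\le (k+t)q+k$, so if I let $\bar P(q)$ be the largest $p$ with $e^{\gamma_p\ell_p}\le(k+t)q+k$ — well defined, independent of $x$, and tending to $\infty$ as $q\to\infty$ since $(\gamma_p\ell_p)_p$ is increasing by Lemma~\ref{computational}(e) — then $P(q)\le\bar P(q)$ and $e^{\gamma_{\bar P(q)}\ell_{\bar P(q)}}\le(k+t)q+k$ for every $x\in F_k$.

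Finally I would invoke Lemma~\ref{computational}(f) with $c=2k+3t$. Since $\ell_p>0$ and $P(q)\le\bar P(q)$, the excess sum is at most $\sum_{p=1}^{\bar P(q)}\ell_p+(2k+3t)\bar P(q)$, and Lemma~\ref{computational}(f) applied at $p=\bar P(q)$ gives
\[\frac{(2k+3t)\bar P(q)+\sum_{p=1}^{\bar P(q)}\ell_p}{e^{\gamma_{\bar P(q)}\ell_{\bar P(q)}}}\longrightarrow 0\qquad(q\to\infty).\]
Together with $e^{\gamma_{\bar P(q)}\ell_{\bar P(q)}}\le(k+t)q+k\le 2(k+t)q$ for $q\ge1$, this shows the excess sum is $o((k+t)q)$ uniformly in $x$, so there is $q(\varepsilon)$ with $\sum_{p=1}^{P(q)}(\ell_p+2k+3t)\le\varepsilon(k+t)q$ for all $q\ge q(\varepsilon)$ and all $x\in F_k$. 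Substituting into the length bound and noting the elementary identity $(1+\varepsilon)(k+t)(q+2)-\bigl(k+(q+1)(k+t)+\varepsilon(k+t)q\bigr)=t+2\varepsilon(k+t)\ge0$ yields the claim. The main obstacle is the combination of the bookkeeping in the first paragraph with the uniformity in $x$: because the sequence $(M_p(x))$ depends on $x$, the negligibility of the insertion steps must be routed through the $x$-independent quantity $\bar P(q)$ and the growth comparison $\sum_p\ell_p\ll e^{\gamma_p\ell_p}\asymp(k+t)M_p(x)$ supplied by Lemma~\ref{computational}(f).
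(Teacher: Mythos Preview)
Your proof is correct and follows the same strategy as the paper: bound $|y_{q+1}(x)|$ by a linear term $(k+t)(q+2)$ plus the total length of the Case~2 insertions, then use Lemma~\ref{computational}(f) together with \eqref{relation3} to show that the insertion contribution is $o(q)$ uniformly in $x$. The only difference is in how uniformity in $x$ is obtained: the paper invokes compactness of $F_k$ and local constancy of the set $R(q,x)=\{p:|v^*w^1(x)\cdots v^{M_p(x)}(x)|\le(k+t)(q+2)\}$, whereas you extract the $x$-independent lower bound $M_p(x)\ge(e^{\gamma_p\ell_p}-k)/(k+t)$ directly from \eqref{relation3}, which is slightly more elementary but equivalent in effect.
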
 \begin{proof} 
For $q\in\mathbb N$ and $x\in F_k$, define \[R(q,x)=\{p\in\mathbb N\colon |v^*w^1(x)v^1(x)\cdots w^{M_p(x)}(x)v^{M_p(x)}(x)|\leq (k+t)(q+2)\}.\] Clearly we have 
\begin{equation}\label{unif-R}\lim_{q\to\infty}\max\{R(q,x)\colon x\in F_k\}=\infty.\end{equation}

Let $x\in F_k$.
The definition of $y_{q+1}(x)$ in section~\ref{const-sec} implies \[|y_{q+1}(x)|\leq (k+t)(q+2)+\sum_{p\in R(q,x)}|w^{p,1}\theta^p(x)w^{p,2} \lambda^p(x)w^{p,3}|.\] Regarding the sum, \eqref{insert-w} gives \begin{equation}\label{eq-a-1}|w^{p,1}\theta^p(x)w^{p,2} \lambda^p(x)w^{p,3}|\leq \ell_p+k+t+k+3t.\end{equation} By virtue of \eqref{unif-R}, for all sufficiently large $q$ we have
\[\begin{split}\sum_{p\in R(q,x)} |w^{p,1}\theta^p(x)w^{p,2} \lambda^p(x)w^{p,3}|&\leq \varepsilon e^{\gamma_{\max R(q,x) } \ell_{\max R(q,x)}}\\ &\leq \varepsilon|v^*w^1(x)v^1(x)\cdots w^{M_{\max R(q,x)}(x)}(x)v^{M_{\max R(q,x)}(x)}(x)|\\ &\leq\varepsilon (k+t)(q+2).\end{split}\]
Indeed, the first inequality follows from \eqref{eq-a-1}  and lemma~\ref{computational}(f) with $c=2k+4t$. The second one follows from \eqref{relation3},  and the last one from the definition of $R(q,x)$. This yields the desired inequality.
\end{proof}
Let $v,w$ be words from $\{0,\ldots,m-1\}$. If $v$ is a prefix of $w$, then write $v\vartriangleleft w$. 
 For each $q\in\mathbb N$, we set \[\Delta_q(a,b)=\{V\in\mathcal L(\Sigma)\colon V=y_q(x)\text{ for some } x\in F_k\}.\]
 From the construction of the seed set $F_k$ in section~\ref{const-sec}, we have
\[\#\Delta_1(a,b)\geq \#\mathcal Q_{k},\]
and for all $q\geq1$ and all $V\in\Delta_q(a,b)$, 
  \[\#\{V'\in\Delta_{q+1}(a,b)\colon 
  \text{$V\vartriangleleft V'$} \}\geq\#\mathcal Q_{k}.\]
  Inductively on $q$, we extract elements from $\Delta_q(a,b)$ and construct a sequence $(\widetilde\Delta_q(a,b))_{q=1}^\infty$ of collections of words such that
  \begin{equation}\label{cardI}\#\widetilde\Delta_1(a,b)=\#\mathcal Q_{k},\end{equation} and 
   for all $q\geq1$ and all $V\in\widetilde\Delta_q(a,b)$, 
   \begin{equation}\label{cardII}\#\{V'\in\widetilde\Delta_{q+1}(a,b)\colon 
  \text{$V\vartriangleleft V'$} \}=\#\mathcal Q_{k}.\end{equation}
We set
\[\widetilde\Lambda_k(a,b)=\bigcap_{q=1}^\infty\bigcup_{V\in\widetilde\Delta_q(a,b)} [V].\]
Since $\Lambda_k(a,b)=\bigcap_{q=1}^\infty\bigcup_{V\in\Delta_q(a,b)} [V]$, we have
$\widetilde\Lambda_k(a,b)\subset\Lambda_k(a,b)$.

For each $y=(y_i)_{i\in\mathbb N}\in\widetilde\Lambda_k(a,b)$ and $n\in\mathbb N$, 
let $q_n(y)\geq0$ denote the minimal integer 
such that $y_1\cdots y_{n}$ is a prefix of some element of $\widetilde\Delta_{q_n(y)+1}(a,b)$.

\begin{lemma}\label{q-lem} For all sufficiently large $n$, we have \[\inf\{q_n(y)\colon y\in \widetilde\Lambda_k(a,b)\}\geq\frac{1}{1+2\varepsilon}\frac{n}{k+t}.\]\end{lemma}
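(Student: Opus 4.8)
The plan is to invert the length estimate of Lemma~\ref{upper-lem}. Fix $y\in\widetilde\Lambda_k(a,b)$ and write $q=q_n(y)$. By the definition of $q_n(y)$ there are $x\in F_k$ and $V=y_{q+1}(x)\in\widetilde\Delta_{q+1}(a,b)$ with $y_1\cdots y_n\vartriangleleft V$, so in particular $n\leq|V|=|y_{q+1}(x)|$. Provided $q\geq q(\varepsilon)$, Lemma~\ref{upper-lem} gives $|y_{q+1}(x)|\leq(1+\varepsilon)(k+t)(q+2)$, whence
\[q\geq\frac{n}{(1+\varepsilon)(k+t)}-2.\]
Everything then reduces to two points: (i) ensuring $q\geq q(\varepsilon)$ once $n$ is large, and (ii) comparing the two constants.

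For (i) I would observe that, exactly as in the proof of Lemma~\ref{upper-lem}, for each fixed level $q$ the length $|y_{q+1}(x)|$ admits a bound independent of $x\in F_k$. Indeed, $y_{q+1}(x)$ is built from at most $q+2$ blocks of length $k$ separated by gaps of length $\leq t$, together with the insertions made at the times $M_p(x)\leq q$; since $M_p(x)\geq p$ there are at most $q$ such insertions, each of length $\leq\ell_p+2k+4t$ by \eqref{eq-a-1}. Hence
\[|y_{q+1}(x)|\leq(k+t)(q+2)+\sum_{p=1}^{q}(\ell_p+2k+4t)=:C_q<\infty.\]
Setting $C(\varepsilon)=\max_{0\leq q\leq q(\varepsilon)}C_q$, any $n>C(\varepsilon)$ is incompatible with $q_n(y)\leq q(\varepsilon)$, for the latter would force $n\leq|y_{q_n(y)+1}(x)|\leq C(\varepsilon)$; thus $q_n(y)>q(\varepsilon)$ for all such $n$, uniformly in $y\in\widetilde\Lambda_k(a,b)$.

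For (ii), the desired bound $q\geq\frac{1}{1+2\varepsilon}\frac{n}{k+t}$ follows from $q\geq\frac{n}{(1+\varepsilon)(k+t)}-2$ as soon as
\[\frac{n}{k+t}\left(\frac{1}{1+\varepsilon}-\frac{1}{1+2\varepsilon}\right)=\frac{\varepsilon\, n}{(1+\varepsilon)(1+2\varepsilon)(k+t)}\geq2.\]
As $k$, $t$, $\varepsilon$ are fixed, the left-hand side diverges with $n$, so this holds for all large $n$. Choosing $n$ above both thresholds then gives the claim uniformly over $y\in\widetilde\Lambda_k(a,b)$.

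I expect step (i)---securing $q_n(y)\geq q(\varepsilon)$, which is exactly the range in which Lemma~\ref{upper-lem} is available---to be the only delicate point; the rest is arithmetic rearrangement. The one thing to check carefully there is that the insertions, whose lengths are controlled by the fixed reals $\ell_p$, contribute only finitely to $|y_{q+1}(x)|$ at each fixed level, which holds because only the indices $p$ with $M_p(x)\leq q$ intervene.
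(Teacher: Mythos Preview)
Your proof is correct and follows the same strategy as the paper: invert the length bound of Lemma~\ref{upper-lem} to get $q_n(y)\geq\frac{n}{(1+\varepsilon)(k+t)}-2$, then absorb the $-2$ into the constant for large $n$. The only difference is in step~(i): the paper obtains $\inf_{y}q_n(y)\to\infty$ by a compactness argument (each $q_n$ is locally constant on the compact set $\widetilde\Lambda_k(a,b)$ and $q_n(y)\to\infty$ pointwise, so a Dini-type argument gives uniformity), whereas you give an explicit uniform bound $|y_{q+1}(x)|\leq C_q$ using $M_p(x)\geq p$. Both are valid; your version is more hands-on and avoids the appeal to compactness, while the paper's version is shorter but less transparent about where the uniformity comes from.
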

\begin{proof} Clearly we have
\begin{equation}\label{unif-q}\lim_{n\to\infty}\inf\{q_n(y)\colon y\in \widetilde\Lambda_k(a,b)\}=\infty.\end{equation}

By virtue of \eqref{unif-q} and lemma~\ref{upper-lem}, for all sufficiently large $n\geq1$
we have $n\leq|y_{q_n(y(x))+1}(x )|\leq(1+\varepsilon)(k+t)(q_n(y(x))+2)$ for all $x\in F_k$ with $y(x)\in\tilde\Lambda_k(a,b)$, and so \[q_n(y(x))\geq\frac{1}{1+\varepsilon}\frac{n}{k+t}-2\geq\frac{1}{1+2\varepsilon}\frac{n}{k+t}.\] The last inequality holds for all sufficiently large $n$.
\end{proof}

Notice that the map $x\in F_k\mapsto y(x)\in\Lambda_k(a,b)$ is injective.
Let
$f_{a,b}\colon \Lambda_k(a,b)\to  F_k$
denote the inverse of this map.

\begin{lemma}\label{counting}
For all $y\in\widetilde\Lambda_k(a,b)$ and 
all $n\in\mathbb N$, we have 
\[
\#\left\{
\begin{tabular}{l}
\!\!\!$V\in\widetilde\Delta_{q_n(y)+1}(a,b)\colon $\!\!\! $y_1\cdots y_{n}\vartriangleleft V$, and $\exists z\in\widetilde\Lambda_k(a,b)$ such that \!\!\!\\
\quad  $q_n(y)\neq M_p(f_{a,b}(z))$ for all $p\in\mathbb N$ and $V=y_{q_n(y)+1}(f_{a,b}(z))$ \end{tabular}
\right\}\leq m^{k+t+1}, 
\]
and
\[
\#\left\{
\begin{tabular}{l}
\!\!\!$V\in\widetilde\Delta_{q_n(y)+1}(a,b)\colon $\!\!\! $y_1\cdots y_{n}\vartriangleleft V$, and $\exists z\in\widetilde\Lambda_k(a,b)$ such that \!\!\!\\
\quad $q_n(y)=M_p(f_{a,b}(z))$ for some $p\in\mathbb N$ and $V=y_{q_n(y)+1}(f_{a,b}(z))$ \end{tabular}
\right\}\leq m^{2k+3t+3}.
\]
\end{lemma}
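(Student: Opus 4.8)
The plan is to fix $y\in\widetilde\Lambda_k(a,b)$ and $n\in\mathbb N$, set $q=q_n(y)$, and count separately the words $V\in\widetilde\Delta_{q+1}(a,b)$ with $y_1\cdots y_n\vartriangleleft V$ that are produced by a non-insertion step (Case~1 in \S\ref{const-sec}, giving the first bound) and by an insertion step (Case~2, giving the second bound). The common first move is to locate the level-$q$ ancestor $U\in\widetilde\Delta_q(a,b)$ of such a $V$: since $U\vartriangleleft V$ and $y_1\cdots y_n\vartriangleleft V$, minimality of $q_n(y)$ forces $|U|<n$ (otherwise $y_1\cdots y_n$ would already be a prefix of the level-$q$ word $U$), so that $U=y_1\cdots y_{|U|}$ is a prefix of $y$. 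It is convenient to record the structural fact that each inductive step appends exactly one block of $\mathcal Q_k$ at the very end of the word; hence the parse of a word into blocks and insertions is forced by the word itself, the level-$q$ ancestor of $y$ is unique, and all the $V$ under consideration share the same $U$.

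For the first bound I would use the non-insertion shape $V=Uwv^{q+1}$ with $|w|\le t$ and $v^{q+1}\in\mathcal Q_k$. Then $|V|\le |U|+t+k\le n-1+t+k$, so $V$ is obtained from the fixed prefix $y_1\cdots y_n$ by appending at most $t+k-1$ symbols, and the number of such words is at most $\sum_{j=0}^{t+k-1}m^{j}<m^{t+k}\le m^{k+t+1}$. This is the easy case.

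The second bound is the crux, since now $V=Uw^{p,1}\theta^p w^{p,2}\lambda^p w^{p,3}v^{q+1}$ and the inserted block $\theta^p$, of length comparable to $\ell_p$, can make $|V|$ far larger than $n$; the naive bounded-tail count of Case~1 then fails. The key observation I would exploit is that $\theta^p$ is a prefix of $U$ by construction and $|\theta^p|\le|U|<n$, so the occurrence of $\theta^p$ inside $V$ is a verbatim copy of a known prefix of $y$ and carries no freedom. The only content of $V$ not already forced by $y$ is thus the short connecting words $w^{p,1},w^{p,2},w^{p,3}$ (total length $\le 3t$) and the two blocks $\lambda^p,v^{q+1}\in\mathcal Q_k$ (total length $2k$), i.e. at most $2k+3t$ symbols. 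I would then show that $V$ is reconstructible from $y_1\cdots y_n$ together with these $\le 2k+3t$ novel symbols and a bounded amount of positional data, namely the lengths $|w^{p,i}|\le t$ and the endpoint of the copied block $\theta^p$, which yields the bound $m^{2k+3t+3}$, the remaining powers of $m$ absorbing the finitely many admissible boundary configurations.

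The delicate point, and where I expect the real work to be, is exactly this boundary bookkeeping: because the lengths $|w^{p,i}|$ and $|\theta^p|$ vary and the copied block $\theta^p$ may straddle position $n$, one has to verify that prescribing the novel symbols and the few admissible offsets genuinely determines $V$, so that the count is governed by the $O(k+t)$ novel content rather than by the long copied block. This is precisely where $U=y_1\cdots y_{|U|}\vartriangleleft y$ with $|U|<n$, the facts from \eqref{insert-ww}--\eqref{insert-w} that $\theta^p$ is a determined prefix of $U$, and the uniform bound $|w^{p,i}|\le t$ all enter.
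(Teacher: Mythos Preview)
Your plan is correct and follows the paper's own argument almost exactly: fix $q=q_n(y)$, use minimality of $q_n(y)$ to force the level-$q$ ancestor $U$ to be a proper prefix of $y_1\cdots y_n$, then count separately the Case~1 and Case~2 extensions. Your Case~1 length argument is in fact slightly cleaner than the paper's, which instead bounds by the number of pairs $(w,v^{q+1})\in\bigl(\bigcup_{j=0}^t\mathcal L_j(\Sigma)\bigr)\times\mathcal Q_k$.

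Where you diverge is in Case~2, and there you are making it harder than it needs to be. You frame the count as ``$\le 2k+3t$ novel symbols plus positional data (offsets, the endpoint of $\theta^p$)'' and flag the boundary bookkeeping as the delicate point. The paper sidesteps all of this: once $U\vartriangleleft y_1\cdots y_n$ and $\theta^p\vartriangleleft U$ are established, $V=Uw^{p,1}\theta^p w^{p,2}\lambda^p w^{p,3}v^{q+1}$ is determined by the $5$-tuple $(w^{p,1},w^{p,2},w^{p,3},\lambda^p,v^{q+1})$, so one simply bounds by
\[
\Bigl(\#\bigcup_{j=0}^t\mathcal L_j(\Sigma)\Bigr)^{3}\cdot(\#\mathcal Q_k)^{2}\le (m^{t+1})^3\cdot(m^k)^2=m^{2k+3t+3}.
\]
No separate accounting of symbol strings versus offsets is needed, because counting \emph{words} in $\bigcup_{j=0}^t\mathcal L_j(\Sigma)$ already packages content and length together. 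Your worry that ``$\theta^p$ may straddle position $n$'' is also a non-issue: the \emph{occurrence} of $\theta^p$ inside $V$ may lie beyond position $n$, but its \emph{content} is a prefix of $U$ (hence of $y_1\cdots y_n$) and its length is fixed by the definite selection rule once $U$ is, so it contributes no freedom at all. The ``delicate bookkeeping'' you anticipate does not materialize.
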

\begin{proof}
Let $V$ belong to the first set in the statement of lemma~\ref{counting}. By the minimality of $q_n(y)$, there exists $z\in\widetilde\Lambda_k(a,b)$ such that
$V=y_{q_n(y)}(x )wv^{q+1}(x)$ 
where $x=f_{a,b}(z)$, $y_{q_n(y)}(x )\vartriangleleft y_1\cdots y_n$,
$v^{q+1}(x)\in\mathcal Q_k$, $w\in\mathcal L(\Sigma)$ and $|w|\leq t$. The total number of words of this form is bounded from above by $\#\bigcup_{n=0}^t\mathcal L_n(\Sigma)\cdot\#\mathcal Q_k\leq m^{k+t+1}$.

Let $V$ belong to the second set in the statement of lemma~\ref{counting}.
By the minimality of $q_n(y)$, there exists $z\in\widetilde\Lambda_k(a,b)$ such that \[V=y_{q_n(y)}(x )w^{p,1}\theta^p(x )w^{p,2 }\lambda^p(x )w^{p,3 }v^{q_n(y)+1}(x ),\]
where $x=f_{a,b}(z)$, $y_{q_n(y)}(x)\vartriangleleft y_1\cdots y_n$,
 $\theta^p(x)$, $v^{q_n(y)+1}(x )$, $\lambda^p(x)\in\mathcal Q_k$,
 $w^{p,i}\in\mathcal L(\Sigma)$ and 
 $|w^{p,i}|\leq t$ for $i=1,2,3$. Combining lemma~\ref{computational}(c), \eqref{relation3} and
\eqref{insert-w} we get \[|\theta^p(x )|< \ell_{q_n(y)}+k+t\leq e^{\gamma_{q_n(y)} \ell_{q_n(y)}}\leq |y_{q_n(y)}(x )|\] provided $n$ is sufficiently large independent of $y$. In particular,
$\theta^p(x )$ is a prefix of $y_1\cdots y_n$. 
The total number of words of this form is bounded from above by $(\#\bigcup_{n=0}^t\mathcal L_n(\Sigma))^3(\#\mathcal Q_k)^2\leq m^{2k+3t+3}$ as required. \end{proof}

\subsection{Proof of theorem~\ref{mainthm}}\label{end-sec}
Let $\Sigma$ be a subshift  
that has a language decomposition $\mathcal L(\Sigma)=\mathcal C^{\rm p}\mathcal G\mathcal C^{\rm s}$ such that $h(\mathcal C^{\rm p}\cup\mathcal C^{\rm s})<h_{\rm top}(\Sigma)$ and $\mathcal G$ has (W')-specification with gap size $t$. 
Let $H\in(h(\mathcal C^{\rm p}\cup\mathcal C^{\rm s}), h_{\rm top}(\Sigma))$. By the variational principle and Jacobs' theorem \cite{Wal82}, there is an ergodic measure $\mu\in\mathcal M(\Sigma,\sigma)$ such that
\begin{equation}\label{ent}h_\mu(\sigma)>H.\end{equation} Let $\varphi\colon\Sigma\to\mathbb R$ be a constant function. 
Let $k>\max\{3t,1\}$ be a sufficiently large integer for which there exist $\mathcal Q_k\subset\mathcal G_k$ and $v^*\in\mathcal G_k\setminus\mathcal Q_k$ as in proposition~\ref{nov***}.
Let $a,b\in[0,\infty]$ satisfy $a\leq b$.

 For each $V\in \widetilde\Delta_q(a,b)$, fix a point 
 $x(V)\in [V]\cap\widetilde\Lambda_k(a,b)$. Define a Borel probability measure $\nu_q$ on $\Sigma$ by 
\[\nu_q=\frac{1}{\#\widetilde\Delta_q(a,b) }\sum_{V\in \widetilde\Delta_q(a,b) }\delta_{x(V)},\] 
where $\delta_{x(V)}$ denotes the unit point mass at $x(V)$.
Pick an accumulation point of the sequence $(\nu_q)_{q=1}^\infty$ in the weak* topology and denote it by $\nu$.
Since $\widetilde\Lambda_k(a,b)$ is a closed subset of $\Sigma$ and $\nu_q(\widetilde\Lambda_k(a,b))=1$,
we have $\nu(\widetilde\Lambda_k(a,b))=1$. In view of lemma~\ref{counting}, put $C_{k,t}=m^{k+t+1}+m^{2k+3t+3}$.

Let $y\in\widetilde\Lambda_k(a,b)$ and $n\in\mathbb N$.
Recall that $q_n(y)\geq0$ is the minimal integer 
such that $y_1\cdots y_{n}$ is a prefix of some element of $\widetilde\Delta_{q_n(y)+1}(a,b)$. 
By \eqref{cardII}, lemma~\ref{q-lem} and lemma~\ref{counting}, 
for all sufficiently large $n$ and all $q\geq q_n(y)$ we have
\[\begin{split}\#\{V\in\widetilde\Delta_{q+1}(a,b)\colon x(V)\in [y_1\cdots y_{n}] \}
=& \#\{V\in\widetilde\Delta_{q+1}(a,b)\colon y_1\cdots y_{n}\vartriangleleft V \}\\
=&(\#\mathcal Q_k)^{q-q_n(y)}\\
&\times\#\{V\in\widetilde\Delta_{q_n(y)+1}(a,b)\colon y_1\cdots y_{n}\vartriangleleft V \}\\
\leq& C_{k,t}(\#\mathcal Q_k)^{q-\frac{1}{1+2\varepsilon}\frac{n}{k+t} }.\end{split}\] 
Since
 $\#\widetilde\Delta_{q+1}(a,b)=(\#\mathcal Q_k)^{q+1}$ by \eqref{cardI} and \eqref{cardII}, we have 
\[\begin{split}\nu_{q+1}([y_1\cdots y_{n}] )&=\frac{\#\{V\in\widetilde\Delta_{q+1}(a,b)\colon x(V)\in [y_1\cdots y_{n}]\}}{\#\widetilde\Delta_{q+1}(a,b)}\\
&\leq C_{k,t}(\#\mathcal Q_{k})^{-\frac{1}{1+2\varepsilon}\frac{n}{k+t}}=C_{k,t}(e^{-n})^{\frac{\log \#\mathcal Q_{k} }{(1+2\varepsilon)(k+t) } },\end{split}\]
and thus
\[\nu([y_1\cdots y_{n}])\leq\limsup_{q\to\infty}\nu_{q+1}([y_1\cdots y_{n}])\leq C_{k,t}(e^{-n})^{\frac{\log \#\mathcal Q_{k}}{(1+2\varepsilon)(k+t) } }.\]
Since $y$ is an arbitrary point of $\widetilde\Lambda_k(a,b)$ and 
 $[y_1\cdots y_{n}]$ is the ball of radius $e^{-n}$ about  $y\in\widetilde\Lambda_k(a,b)$,
the mass distribution principle 
yields 
\[\dim_{\rm H}\widetilde\Lambda_{k}(a,b)\geq
   \frac{\log \#\mathcal Q_{k} }{(1+2\varepsilon)(k+t) }.\]
By lemma~\ref{contained},
 proposition~\ref{nov***}(a) and \eqref{ent},
\[\dim_{\rm H}R_{\sigma,\xi}(a,b)\geq\dim_{\rm H}\widetilde\Lambda_{k}(a,b)\geq
\frac{k(H-\varepsilon)}{(1+2\varepsilon)(k+t)}.\]
   Increasing $k$ to $\infty$, and then decreasing $\varepsilon$ to $0$ and then increasing $H$ to $h_{\rm top}(\Sigma)$, we obtain the desired inequality in \eqref{lower}.
The proof of theorem~\ref{mainthm} is complete. \qed

\section{Dimension of recurrence sets for interval maps}
This section is devoted to the analysis of recurrence sets for interval maps in theorem~\ref{monotone-thm}.
In section~\ref{Markov-sec} we introduce a Markov diagram, a directed graph associated with a given interval map. Using a Markov diagram, we show that the coding space of any interval map in theorem~\ref{monotone-thm} satisfies the assumption in theorem~\ref{mainthm}. In section~\ref{lastlast}, we exploit the Markov diagram and the Moran fractals constructed in section~\ref{moran-sec} to  complete the proof of theorem~\ref{monotone-thm}. 
\subsection{Markov diagram}\label{Markov-sec}
Let $T\colon[0,1]\to[0,1]$ be a transitive piecewise monotonic map 
with positive topological entropy.
Let \[X_T=\bigcap_{n=0}^\infty T^{-n}\left(\bigcup_{j=0}^{m-1} {\rm int}(I_j) \right),\] and
define $\pi\colon X_T\to \Sigma_m$ by
$x\in \bigcap_{n=0}^\infty T^{-n}({\rm int}(I_{(\pi(x))_{n+1}}) ) $.
The closure of $\pi(X_T)$ in $\Sigma_m$ is a subshift, called  {\it the coding space} of $T$ and denoted by $\Sigma_T$. Notice that $\pi\circ T|_{X_T}=\sigma\circ\pi$.
It is easy to see that if $T$ is transitive then $\pi$ is injective. If moreover $T$  has positive topological entropy, then $h_{\rm top}(\Sigma_T)>0$.

Let $C$ be a non-empty closed subset of $\Sigma_T$ that is contained in one of the $1$-cylinders. 
We say a non-empty closed subset $D$ of $\Sigma_T$ is {\it a successor} of $C$ if there exists $j\in\{0,\ldots,m-1\}$ such that
$D=[j]\cap\sigma C$. If $D$ is a successor of $C$, we write $C\to D$.
We set
$\mathcal{D}_0=\{[0],\ldots,[m-1]\}$,
and define $\mathcal D_n$, $n=1,2,\ldots$ 
by the recursion formula
\[\mathcal{D}_{n}=\mathcal{D}_{n-1}\cup \{D\colon D\text{ is a successor of some }C\in\mathcal{D}_{n-1}\}.\]
We set
\[\mathcal{D}=\bigcup_{n= 0}^\infty\mathcal{D}_n.\]
The directed graph $(\mathcal D,\to)$ is called {\it a Markov diagram} associated with $\Sigma_T$.

For a set $\mathcal C\subset\mathcal D$,
 let $\Sigma_\mathcal C$ denote the set of 
 paths 
 $C_1\to C_2\to \cdots$
 that contains infinitely many edges and all whose vertices are contained in $\mathcal C$.
We define a map
$\Psi\colon \Sigma_{\mathcal{D}}\to\{0,\ldots,m-1\}^{\mathbb N}$ as follows.
For each vertex $C_1$ or each path $C_1\to \cdots \to C_{n}$ in the diagram $(\mathcal D,\to)$, define
\[(C_1\cdots C_{n})=x_1\cdots x_{n}\in\{0,\ldots,m-1\}^n,\]
where $C_i\subset [x_i]$ for $i=1,\ldots,n$. 
For $(C_n)_{n=1}^\infty\in \{0,\ldots,m-1\}^\mathbb N$
define
\[ \Psi((C_n)_{n=1}^\infty)\in
\bigcap_{n=1}^\infty[(C_1\cdots C_{n})].\]
Note that $\Sigma_\mathcal D$ is a Markov shift over the countably infinite alphabet $\mathcal D$, $\Psi(\Sigma_\mathcal D)=\Sigma_T$, and 
$\Psi$ semiconjugates
the left shift on $\Sigma_\mathcal D$ to that on
$\Sigma_T$.

\begin{lemma}[{\cite[pp.224, 226]{HR98}}]\label{irred-lem}There exist a subset
$\mathcal{C}$ of $\mathcal{D}$ and a finite subset
$\mathcal{F}$ of $\mathcal{C}$
such that:
\begin{itemize}
\item[(a)]  for any pair $(C,C')$ of vertices in $\mathcal C$ there is a path that leads from $C$ to $C'$;
\item[(b)]
$C\in\mathcal{C}$ and $C\rightarrow D$
imply $D\in\mathcal{C}$;
\item[(c)] 
$\Psi(\{(C_n)_{n=1}^\infty\in\Sigma_{\mathcal{C}}\colon C_1\in\mathcal{F}\})
=\Psi(\Sigma_{\mathcal{C}})=\Sigma_T$.
\end{itemize}
\end{lemma}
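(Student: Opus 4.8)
The plan is to produce $\mathcal C$ as the \emph{recurrent core} of the Markov diagram singled out by a transitive orbit, and then to read (a)--(c) off its structure. First I would record the consequences of transitivity: since $T$ is transitive and $\pi$ is injective, $\sigma|_{\Sigma_T}$ is topologically transitive, and since $h_{\rm top}(\Sigma_T)>0$ the space $\Sigma_T$ is perfect; hence there is a point $\omega\in\Sigma_T$ with dense forward orbit, and moreover every word of $\mathcal L(\Sigma_T)$ occurs infinitely often in $\omega$. I would then lift $\omega$ to its canonical path $C_1\to C_2\to\cdots$ in $(\mathcal D,\to)$, where $C_n=\sigma^{n-1}[\omega_1\cdots\omega_n]$ is the follower-set vertex carrying the prefix $\omega_1\cdots\omega_n$ (so that $C_{n+1}=[\omega_{n+1}]\cap\sigma C_n$), and take as candidate for $\mathcal C$ the set of vertices this path visits infinitely often, which, as will be seen, must be enlarged to the terminal strongly connected component containing them.

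Condition (a) is the soft part. If $C,C'$ are both visited infinitely often by the canonical path, then picking visit times $n<n'$ with $C_n=C$ and $C_{n'}=C'$ exhibits the subpath $C_n\to\cdots\to C_{n'}$ as a path from $C$ to $C'$, and symmetry supplies the reverse path; thus the infinitely-visited vertices are mutually accessible, and a strongly connected component is mutually accessible by definition, which is exactly (a).

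The heart of the matter, and the step I expect to be the main obstacle, is (b): that $\mathcal C$ is closed under $\to$. One wants a successor $D=[j]\cap\sigma C$ of a recurrent vertex $C$ to be again recurrent. Writing $w$ for a word whose follower-set vertex is $C$, the word $wj$ has follower-set vertex exactly $D$, and $wj$ recurs infinitely often in $\omega$; the trouble is that, because $\Sigma_T$ need not be sofic, the follower set of a long word \emph{ending} in $wj$ can be a proper subset of $D$, so recurrence of $wj$ only forces recurrence of vertices contained in $D$, not of $D$ itself. Closing this gap is precisely where the fine combinatorics of the interval-map diagram from \cite{HR98} enters: the vertices are the intervals generated from $\xi_T$ by the monotone branches of $T$, each vertex has out-degree at most $m$, and the recurrent vertices of least level form a single component that is genuinely closed under $\to$. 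Accordingly $\mathcal C$ must be taken to be this terminal component rather than the naive set of infinitely-visited follower sets, and proving its forward-invariance is the combinatorial content imported from \cite{HR98}.

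Finally I would establish (c) by transfer and compactness, taking $\mathcal F$ to be the finite set of least-level vertices of $\mathcal C$ — equivalently the successors of the base vertices $[0],\dots,[m-1]$ lying in $\mathcal C$. Because each vertex has out-degree at most $m$, the infinite $\mathcal C$-paths issuing from the finite set $\mathcal F$ form a compact set by K\"onig's lemma, so $Y:=\Psi(\{(C_n)_{n=1}^\infty\in\Sigma_{\mathcal C}\colon C_1\in\mathcal F\})$ is closed in $\Sigma_T$. To see $Y=\Sigma_T$ I would use transitivity: along any sequence $\sigma^{n_j}\omega\to x$ the follower sets $\sigma^{n_j}[\omega_1\cdots\omega_{n_j+1}]$ converge to the follower set determined by the limiting past of $x$, which lies in $\mathcal C$ by forward-invariance and yields a $\mathcal C$-path coding $x$, giving $\Psi(\Sigma_{\mathcal C})=\Sigma_T$; the level structure then lets one initialize such a path at a least-level vertex, i.e.\ in $\mathcal F$, so that a dense subset of $\Sigma_T$ lies in the closed set $Y$, whence $Y=\Sigma_T$. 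The only genuinely hard input throughout is the forward-invariance in (b); once $\mathcal C$ is correctly identified as the terminal component, (a) and (c) reduce to transitivity and the finite branching of the diagram.
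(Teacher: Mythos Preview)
The paper does not prove this lemma at all: it is stated as a citation from \cite[pp.~224,~226]{HR98}, and the following remark notes that in fact a stronger statement from \cite{HR98} is being quoted, which in turn rests on \cite[Theorems~10,~11]{H}. So there is no ``paper's own proof'' to compare against; the authors treat the existence of the irreducible, forward-invariant component $\mathcal C$ of the Markov diagram as an established structural fact about Hofbauer's diagram for transitive piecewise monotonic maps.

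Your sketch is a reasonable outline of why one should \emph{expect} such a $\mathcal C$ to exist, and you correctly isolate (b) as the crux. But as written it is not a proof: you explicitly defer the forward-invariance of $\mathcal C$ to ``the combinatorial content imported from \cite{HR98}'', which is precisely the nontrivial input. Your attempt to build $\mathcal C$ from the vertices visited infinitely often by the canonical lift of a single transitive point does not by itself give (b) --- as you yourself observe, recurrence of a word $wj$ in $\omega$ only forces recurrence of subvertices of $D$, not of $D$ --- and passing to ``the terminal strongly connected component'' begs the question of why that component is closed under $\to$. The actual argument in Hofbauer's theory uses the specific structure of the diagram for interval maps (each $D\in\mathcal D\setminus\mathcal D_0$ has a unique predecessor outside a finite set, the ``level'' function, and the bounded number of new vertices created at each level), none of which your sketch invokes. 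If you want a self-contained proof rather than a citation, you would need to reproduce that structural analysis; otherwise, the honest move is what the paper does: cite \cite{HR98} and \cite{H}.
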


\begin{remark}
In \cite[pp.224, 226]{HR98}, a stronger statement than lemma~\ref{irred-lem} was used that follows from \cite[theorems~10, 11]{H}.
\end{remark}

Let $\mathcal C\subset\mathcal D$ and let $\mathcal F\subset\mathcal C$ be a finite set for which the conclusion of lemma~\ref{irred-lem} holds.
For each integer $N\geq0$
we define subsets
 $\mathcal C^{{\rm p},N}$, $\mathcal G^N$, $\mathcal C^{{\rm s},N}$ of $\mathcal L(\Sigma_T)$ by
\begin{equation}\label{language-decomp}\begin{split}\mathcal C^{{\rm p},N}&=\{\emptyset\},\\
\mathcal G^N&=\{\emptyset\}\cup\bigcup_{n=1}^\infty
\{(C_1\cdots C_{n})\colon C_1\to\cdots\to C_{n},\ C_1,C_{n}\in
\mathcal{D}_N\cap\mathcal{C}\},\\
\mathcal C^{{\rm s},N}&=\{\emptyset\}\cup\bigcup_{n=1}^\infty \left\{\begin{tabular}{l}\vspace{1mm}
$\!\!\displaystyle{(C_1\cdots C_{n})\colon C_1\to\cdots \to C_{n},
\!\!}$\\
$\!\!\displaystyle{ C_1\in
\mathcal{D}_{N+1},\
C_i\in\mathcal{C}\setminus\mathcal{D}_N\text{ for }i=1,\ldots,n}$\end{tabular}
\!\!\right\}.\end{split}\end{equation}

\begin{lemma}\label{Mar-lem}The following statements hold:
\begin{itemize}
\item[(a)] for every $N\geq0$, $\mathcal G^N$ has (W')-specification with gap size $t_N$;

\item[(b)] if $\mathcal F\subset\mathcal D_N$, then $\mathcal L(\Sigma_T)=\mathcal C^{{\rm p},N}\mathcal G^N\mathcal C^{{\rm s},N}$;

\item[(c)] $\lim_{N\to\infty}h(\mathcal C^{{\rm s},N})=0$.
\end{itemize}
\end{lemma}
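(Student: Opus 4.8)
The plan is to dispose of parts (b) and (c) first, since they amount to bookkeeping on the diagram, and to spend the bulk of the effort on (a). Throughout I would write $\mathcal V_N=\mathcal D_N\cap\mathcal C$; since each $\mathcal D_n$ is obtained from the finite set $\mathcal D_0$ by finitely many successor operations, $\mathcal D_N$ and hence $\mathcal V_N$ is finite. Two structural features of the diagram will be used repeatedly. First, it is \emph{right-resolving}: a vertex $C$ together with a word $w$ readable from $C$ determines the reading path uniquely, because a successor is pinned down by its symbol via $D=[j]\cap\sigma C$. Second, reading is \emph{monotone} under inclusion: if $C\subseteq C'$ and $w$ is readable from $C$, then $w$ is readable from $C'$ and the two terminal vertices again satisfy $\subseteq$. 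I would also record at the outset that, by Lemma~\ref{irred-lem}(c), $\mathcal L(\Sigma_T)$ equals precisely the set of words read by finite paths in $\mathcal C$, and moreover every such word is read by a path whose first vertex lies in the finite set $\mathcal F$.

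For (b), assume $\mathcal F\subseteq\mathcal D_N$. Given $w\in\mathcal L(\Sigma_T)$, realize it by a path $C_1\to\cdots\to C_n$ in $\mathcal C$ with $C_1\in\mathcal F$. Then $C_1\in\mathcal F\subseteq\mathcal D_N\cap\mathcal C=\mathcal V_N$. Let $i^\ast=\max\{i\colon C_i\in\mathcal D_N\}$, which is well defined since $C_1\in\mathcal D_N$. By forward-closedness (Lemma~\ref{irred-lem}(b)) every $C_i$ lies in $\mathcal C$, so $C_{i^\ast}\in\mathcal V_N$ and the prefix $(C_1\cdots C_{i^\ast})$ lies in $\mathcal G^N$. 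For the tail, $C_{i^\ast+1}$ is a successor of $C_{i^\ast}\in\mathcal D_N$, hence $C_{i^\ast+1}\in\mathcal D_{N+1}$, while by maximality $C_i\in\mathcal C\setminus\mathcal D_N$ for all $i>i^\ast$; thus $(C_{i^\ast+1}\cdots C_n)\in\mathcal C^{\mathrm s,N}$ (taken empty if $i^\ast=n$). Since $\mathcal C^{\mathrm p,N}=\{\emptyset\}$, this exhibits $w\in\mathcal C^{\mathrm p,N}\mathcal G^N\mathcal C^{\mathrm s,N}$, and the reverse inclusion is immediate.

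For (c), I would bound $\#(\mathcal C^{\mathrm s,N})_n$ by the number of length-$n$ paths in $\mathcal C$ all of whose vertices lie in $\mathcal C\setminus\mathcal D_N$ and whose first vertex lies in the finite set $\mathcal D_{N+1}$; distinct words give distinct paths under $\Psi$, so this is an upper bound. Consequently $h(\mathcal C^{\mathrm s,N})$ is at most the exponential growth rate of paths confined to the induced subgraph on $\mathcal C\setminus\mathcal D_N$. The point is then that this ``tail'' growth rate decreases to $0$ as $N\to\infty$: the entropy of the Hofbauer--Raith diagram is carried by its shallow recurrent part, and the complement of any finite level $\mathcal D_N$ becomes entropically negligible when $h_{\mathrm{top}}(\Sigma_T)<\infty$. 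I would invoke this property of the Markov diagram from \cite{HR98,H2} (it is exactly what makes the decomposition \eqref{language-decomp} a useful one), rather than reprove it.

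Part (a) is the heart of the matter, and I expect it to be the main obstacle. Using Lemma~\ref{irred-lem}(a) and the finiteness of $\mathcal V_N$, define $t_N$ to be the maximum over ordered pairs $(C,C')\in\mathcal V_N\times\mathcal V_N$ of the length of a shortest path from $C$ to $C'$ in $\mathcal C$; this is finite. As a warm-up one gets (W)-specification cleanly: given $v^1,\dots,v^k\in\mathcal G^N$, splice their representing paths (which by forward-closedness lie entirely in $\mathcal C$ and have both endpoints in $\mathcal V_N$) by inserting, between consecutive ones, a shortest $\mathcal C$-path of length $\le t_N$ joining the terminal vertex of one to the initial vertex of the next; reading off this concatenated path gives the desired word. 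For the genuine (W')-specification the gap words $w^i$ inside $u$ and $v$ are \emph{prescribed}, so by right-resolving the realizing paths are forced once their starting vertices are chosen, and the difficulty is that the terminal vertex of a realizing path of $u$ is typically a ``deep'' vertex of $\mathcal C\setminus\mathcal D_N$, at unbounded distance from $\mathcal V_N$. The plan is to anchor the connection at the last block of $u$ and the first block of $v$: since $u$ ends with $v^j\in\mathcal G^N$ (representing path terminating at some $D\in\mathcal V_N$) and $v$ begins with $v^{j+1}\in\mathcal G^N$ (representing path starting at some $D'\in\mathcal V_N$), I would produce a realizing path of $u$ whose final $|v^j|$ edges are exactly the representing path of $v^j$, hence terminating at $D$, and symmetrically a realizing path of $v$ starting at $D'$; then bridge $D$ to $D'$ by a $\mathcal C$-path of length $\le t_N$, reading off the connecting word $w$. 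The hard and decisive step is to justify that such anchored realizing paths exist, i.e.\ that the forced readings can be made to pass through $\mathcal V_N$ at the junction; this is where I would combine the right-resolving property, the monotonicity of reading under inclusion, and the fact that every word is realizable from the finite set $\mathcal F$ (Lemma~\ref{irred-lem}(c) and its strengthening noted in the Remark). Establishing this uniform routing through the finite set $\mathcal V_N$, against arbitrary prescribed interior gaps, is the main obstacle of the whole lemma.
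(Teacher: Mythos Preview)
Your treatments of (b) and (c) are correct and agree with the paper, which simply defers both to \cite[Proposition~4.1]{STY24}.

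For (a) the contrast with the paper is sharp. The paper's entire argument is: take $u,v\in\mathcal G^N$, realize them by paths with $C_{|u|}\in\mathcal D_N\cap\mathcal C$ and $D_1\in\mathcal D_N\cap\mathcal C$, and bridge $C_{|u|}$ to $D_1$ by a path of at most $t_N$ edges. Read literally this only shows that any two \emph{elements} of $\mathcal G^N$ can be glued with a gap of length $\le t_N$; the concatenations of Definition~\ref{def-strong}, in which the interior gap words $w^i$ are \emph{prescribed}, are never addressed. You are right to isolate this as the crux, and routing through the last block $v^j$ of $u$ and the first block $v^{j+1}$ of $v$ is the natural idea. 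But the tools you list do not close the gap. Right-resolving and monotonicity give only inclusions: if $C\subseteq C'$ then the terminal vertices after reading a common word satisfy the same inclusion. They cannot force a realizing path of $u$ to pass through the specific vertex $E_1\in\mathcal V_N$ at position $|u|-|v^j|+1$, nor do they place the terminal vertex of any realizing path of $u$ inside $\mathcal D_N$; that terminal vertex and the $\mathcal G^N$-terminus $E_{|v^j|}$ of $v^j$ are both contained in the canonical terminus $\sigma^{|v^j|-1}([v^j])$, with no further relation between them. So the ``main obstacle'' you name is real and your outline does not resolve it --- and neither, as written, does the paper's proof. What the paper's short argument \emph{does} yield is that gluing two elements of $\mathcal G^N$ along a bridging diagram path produces a word again in $\mathcal G^N$; this closure is exactly what the inductive construction in \S\ref{const-sec} uses, since there every intermediate word (including each prefix $\theta^p(x)$) is obtained by concatenating diagram paths and hence remains in $\mathcal G^N$. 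But it falls short of (W')-specification in the full generality of Definition~\ref{def-strong}.
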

\begin{proof}Let
$t_N\geq1$ denote the minimal integer such that for any pair $(C,C')$ of vertices in $\mathcal{D}_{N}\cap \mathcal{C}$ there is a path that leads from $C$ to $C'$ and does not contain more than $t_N$ edges.
By lemma~\ref{irred-lem}(a) and the finiteness of $\mathcal{D}_{N}$,
 $t_N$ is well-defined. 
 
 Let $u,v\in\mathcal G^N$.
There exist two paths
$C_1\to\cdots \to C_{|u|}$ and $D_1\to\cdots\to D_{|v|}$
such that $(C_1\cdots C_{|u|})=u$, $C_{|u|}\in\mathcal{D}_{N}\cap \mathcal{C}$
and $(D_1\cdots D_{|v|})=v$,  $D_1\in\mathcal{D}_{N}\cap\mathcal{C}$.
Hence, there is a path
$C_{|u|}\to\cdots\to D_1$ 
not containing more than $t_N$ edges, which implies (a). 
The proof of \cite[proposition~4.1]{STY24} shows (b) and (c). \end{proof}

Lemma~\ref{Mar-lem} implies the next proposition.
\begin{prop}
\label{CT-dec}
Let $T\colon[0,1]\to[0,1]$ be a transitive piecewise monotonic map
with positive topological entropy.
For any $\varepsilon>0$ there is a decomposition
$\mathcal L(\Sigma_T)=\mathcal C^{\rm p}\mathcal G\mathcal C^{\rm s}$ such that $\mathcal G$ has (W')-specification and $h(\mathcal C^{\rm p}\cup\mathcal C^{\rm s})<\varepsilon$.
\end{prop}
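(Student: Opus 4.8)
The plan is to read the decomposition off directly from the three parts of Lemma~\ref{Mar-lem}, applied with a single large parameter $N$ chosen to make both the language-decomposition hypothesis and the entropy bound hold at once. First I would invoke Lemma~\ref{irred-lem} to fix a subset $\mathcal C$ of $\mathcal D$ together with the finite set $\mathcal F\subset\mathcal C$, so that the families $\mathcal C^{{\rm p},N}$, $\mathcal G^N$, $\mathcal C^{{\rm s},N}$ defined in \eqref{language-decomp} are available for every integer $N\geq0$.

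The choice of $N$ proceeds in two independent steps. Since $\mathcal F$ is finite and the sets $\mathcal D_0\subset\mathcal D_1\subset\cdots$ increase to $\mathcal D=\bigcup_{n\ge0}\mathcal D_n$, every element of $\mathcal F$ lies in some $\mathcal D_n$, and hence there is $N_0$ with $\mathcal F\subset\mathcal D_{N_0}\subset\mathcal D_N$ for all $N\geq N_0$. Independently, by Lemma~\ref{Mar-lem}(c) there is $N_1$ such that $h(\mathcal C^{{\rm s},N})<\varepsilon$ for all $N\geq N_1$. Fixing $N=\max\{N_0,N_1\}$ secures both properties simultaneously.

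With this $N$ I would simply set $\mathcal C^{\rm p}=\mathcal C^{{\rm p},N}$, $\mathcal G=\mathcal G^N$ and $\mathcal C^{\rm s}=\mathcal C^{{\rm s},N}$. Lemma~\ref{Mar-lem}(b), available because $\mathcal F\subset\mathcal D_N$, gives $\mathcal L(\Sigma_T)=\mathcal C^{\rm p}\mathcal G\mathcal C^{\rm s}$, and Lemma~\ref{Mar-lem}(a) gives that $\mathcal G=\mathcal G^N$ has (W')-specification with gap size $t_N$. It then remains only to transfer the entropy bound from $\mathcal C^{{\rm s},N}$ to $\mathcal C^{\rm p}\cup\mathcal C^{\rm s}$. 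Here the single bookkeeping point is that $\mathcal C^{{\rm p},N}=\{\emptyset\}$ contributes nothing: for every $n\geq1$ we have $(\mathcal C^{\rm p})_n=\emptyset$, so $(\mathcal C^{\rm p}\cup\mathcal C^{\rm s})_n=(\mathcal C^{{\rm s},N})_n$, whence $h(\mathcal C^{\rm p}\cup\mathcal C^{\rm s})=h(\mathcal C^{{\rm s},N})<\varepsilon$.

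The substantive content of the proposition has already been absorbed into Lemma~\ref{Mar-lem}, so I do not expect a genuine obstacle at this stage; the statement is essentially a repackaging. The only points requiring care are the simultaneous choice of $N$ large enough for both Lemma~\ref{Mar-lem}(b) and (c), and the observation that the trivial prefix family $\{\emptyset\}$ does not inflate the exponential growth rate of $\mathcal C^{\rm p}\cup\mathcal C^{\rm s}$.
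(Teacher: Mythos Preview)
Your proposal is correct and follows exactly the route the paper intends: the paper simply states that Lemma~\ref{Mar-lem} implies Proposition~\ref{CT-dec}, and your argument spells out precisely this implication by choosing $N$ large enough for both (b) and (c), then observing that $\mathcal C^{{\rm p},N}=\{\emptyset\}$ contributes nothing to the growth rate.
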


\subsection{Proof of theorem~\ref{monotone-thm}}\label{lastlast}

Let $T\colon[0,1]\to[0,1]$ be a transitive piecewise expanding map of class $C^2$.
By \cite{LY73} and \cite[corollary~4]{L81}, there exists a $T$-invariant ergodic Borel probability measure $\nu$ that is  absolutely continuous with respect to the Lebesgue measure on $[0,1]$. Put $\chi_\nu(T)=\int\log|T'|d\nu$.
Since $T$ is expanding, $\chi_\nu(T)>0$.
By
\cite{R64} and \cite[theorem~3]{L81}, Rohlin's formula holds:
\begin{equation}\label{rok}h_\nu(T)=
\chi_\nu(T).\end{equation}
Clearly we have $\nu(X_T)=1$.
The measure $\mu=\nu\circ\pi^{-1}$ belongs to
$\mathcal M(\Sigma_T,\sigma)$ and satisfies $h_\mu(\sigma)=h_\nu(T)$. 
Let $\varepsilon\in(0,h_\mu(\sigma))$.
By lemma~\ref{Mar-lem}, there exist $N\in\mathbb N$ and a decomposition
$\mathcal L(\Sigma_T)=\mathcal C^{{\rm p},N}\mathcal G^N\mathcal C^{{\rm s},N}$ such that $\mathcal G^N$ has (W')-specification and $h(\mathcal C^{{\rm p},N}\cup\mathcal C^{{\rm s},N})<\varepsilon$. 

Since $T$ is expanding of class $C^2$, the function $x\in \pi(X_T)\mapsto \log|T'(\pi^{-1}(x))|\in\mathbb R$ is uniformly continuous. 
Let $\varphi$
denote the continuous extension of this function to $\Sigma_T$.
Let $k>\max\{3t,1\}$ be a sufficiently large integer for which there exist $\mathcal Q_k\subset\mathcal G_k$ and $v^*\in\mathcal G_k\setminus\mathcal Q_k$ as in proposition~\ref{nov***}  applied to
the above decomposition of $\mathcal L(\Sigma_T)$. 
Let 
${\rm diam}(\cdot)$ denote the Euclidean diameter of a subset of $[0,1]$, and
set \[K_N=\inf\{{\rm diam}(\pi^{-1}(D))\colon D\in\mathcal D_N\}.\] Since $\mathcal D_N$ is a finite set, $K_N$ is positive.

Let $a,b\in[0,\infty]$ satisfy $a\leq b$.
We have $\pi^{-1}(\widetilde\Lambda_k(a,b))\subset R_{T,\xi_T}(a,b)$.
To estimate the Hausdorff dimension of
$\pi^{-1}(\widetilde\Lambda_k(a,b))$ from below,
for each $q\in\mathbb N$ we put \[\Gamma_q(a,b)=\{\overline{\pi^{-1}([V])}\colon V\in\widetilde\Delta_q(a,b)\}.\]
Elements of $\Gamma_q(a,b)$
are non-degenerate closed subintervals of $[0,1]$ that intersect $\pi^{-1}(\widetilde\Lambda_k(a,b))$ and have mutually disjoint interiors.

\begin{lemma}\label{diam-lem}
For 
all $q\in\mathbb N$ and all $B\in\Gamma_{q}(a,b)$, we have
\[{\rm diam}(B)\geq K_N\exp(-(\chi_\nu(T)+2\varepsilon)kq).\]\end{lemma}
\begin{proof}
From the definition of $\mathcal G^N$ in \eqref{language-decomp}, for any $B\in\Gamma_{q}(a,b)$ 
there exist $n\in\mathbb N$,  $V\in\widetilde\Delta_q(a,b)$, $D\in\mathcal D_N$ such that
$\overline{\pi(B\cap X_T)}=[V]$, $\sigma^{|V|}[V]\supset D$, and $T^{|V|}|_B$ is a $C^1$ diffeomorphism onto its image and 
$T^{|V|}(B)\supset\overline{\pi^{-1}(D)}$. 
By the mean value theorem, there exists $x\in B$ such that \begin{equation}\label{comb1}|(T^{|V|})'x|{\rm diam}(B)={\rm diam}(\pi^{-1}(D)).\end{equation}
By proposition~\ref{nov***}(b) we have 
\begin{equation}\label{comb2}|(T^{|V|})'x|\leq\exp((\chi_\nu(T)+\varepsilon)kq)(\sup|T'|)^{tq}\leq 
\exp((\chi_\nu(T)+2\varepsilon)kq),\end{equation}where the last inequality holds provided $k$ is sufficiently large. Combining \eqref{comb1} and \eqref{comb2} yields the desired inequality. \end{proof}

Similarly to section~\ref{end-sec},
 for each $B\in \Gamma_q(a,b)$ fix a point
 $x(B)$ in ${\rm int}(B)\cap\pi^{-1}(\widetilde\Lambda_k(a,b))$, and define a Borel probability measure $\nu_q$ on $[0,1]$ by 
\[\nu_q=\frac{1}{\#\Gamma_q(a,b) }\sum_{B\in \Gamma_q(a,b) }\delta_{x(B)},\] 
where $\delta_{x(B)}$ denotes the unit point mass at $x(B)$. Pick an accumulation point of the sequence $(\nu_q)_{q=1}^\infty$ in the weak* topology and denote it by $\nu$.
Since $\widetilde\Lambda_k(a,b)$ is a closed subset of $\Sigma_T$ and $\pi$ is continuous, $\pi^{-1}(\widetilde\Lambda_k(a,b))$ is a closed subset of $[0,1]$.
Since $\nu_q(\pi^{-1}(\widetilde\Lambda_k(a,b)) )=1$ for all $q\in\mathbb N$,
we have $\nu(\pi^{-1}(\widetilde\Lambda_k(a,b)))=1$.

\begin{lemma}\label{final-e}For all $q\in\mathbb N$ and all $B\in\Gamma_q(a,b)$, we have 
\[\nu(B)=\frac{1 }{\#\Gamma_{q}(a,b)}.\]\end{lemma}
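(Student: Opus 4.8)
The plan is to transport the self-similar tree structure recorded in \eqref{cardI} and \eqref{cardII} from the cylinders to the intervals $B=\overline{\pi^{-1}([V])}$ and then to read off $\nu(B)$ as a genuine limit of the quantities $\nu_{q'}(B)$. The first step is combinatorial. For $q'\ge q$ each $V'\in\widetilde\Delta_{q'}(a,b)$ extends a unique $V\in\widetilde\Delta_q(a,b)$, so each $B'\in\Gamma_{q'}(a,b)$ is contained in a unique $B\in\Gamma_q(a,b)$; iterating \eqref{cardII} shows that each $B\in\Gamma_q(a,b)$ contains exactly $(\#\mathcal Q_k)^{q'-q}$ members of $\Gamma_{q'}(a,b)$, while $\#\Gamma_{q'}(a,b)=(\#\mathcal Q_k)^{q'}$. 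Since the chosen point $x(B')$ lies in ${\rm int}(B')$ and the elements of $\Gamma_q(a,b)$ have pairwise disjoint interiors, one checks that $x(B')\in B$ if and only if $B'\subset B$, and in that case $x(B')\in{\rm int}(B')\subset{\rm int}(B)$. Counting the points $x(B')$ that lie in $B$ therefore gives, for every $q'\ge q$,
\[\nu_{q'}(B)=\frac{(\#\mathcal Q_k)^{q'-q}}{(\#\mathcal Q_k)^{q'}}=\frac{1}{\#\Gamma_q(a,b)}.\]

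It remains to pass to the weak* limit $\nu$. The difficulty is that distinct intervals of $\Gamma_q(a,b)$ may abut, so $\nu$ could a priori place mass on the boundary $\partial B$; if it did, the portmanteau theorem would only give $\nu({\rm int}(B))\le 1/\#\Gamma_q(a,b)\le\nu(B)$ rather than equality. To rule this out I would show that $B$ is a $\nu$-continuity set, i.e. $\nu(\partial B)=0$. Each of the two endpoints of $B=\overline{\pi^{-1}([V])}$ is a boundary point of the cylinder interval $\pi^{-1}([V])$, and a finite part of its $T$-orbit meets the boundary of a partition element; such points are excluded from $X_T$, hence from $\pi^{-1}(\widetilde\Lambda_k(a,b))\subset X_T$. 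Since $\nu$ is carried by the closed set $\pi^{-1}(\widetilde\Lambda_k(a,b))$, the endpoints of $B$ carry no $\nu$-mass, and therefore $\nu(\partial B)=0$.

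With $B$ a continuity set, weak* convergence along the subsequence defining $\nu$ yields $\nu(B)=\lim_j\nu_{q_j}(B)$, and since $\nu_{q_j}(B)=1/\#\Gamma_q(a,b)$ for all $q_j\ge q$ by the first step, the asserted identity follows. I expect the only delicate point to be the endpoint-nullity in the middle paragraph; the combinatorial count and the continuity-set argument are routine once the nesting and disjoint-interior properties of $\Gamma_q(a,b)$ are in place.
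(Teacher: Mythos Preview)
Your proposal is correct and follows essentially the same route as the paper's proof: compute $\nu_{q'}(B)$ for $q'\ge q$ via the uniform branching coming from \eqref{cardI}--\eqref{cardII}, then pass to the limit using that $B$ is a $\nu$-continuity set. The paper simply asserts $\nu(\partial B)=0$ without justification; your argument that the endpoints of $B=\overline{\pi^{-1}([V])}$ lie outside $X_T$ (hence outside $\pi^{-1}(\widetilde\Lambda_k(a,b))$, which carries all of $\nu$) is a valid way to fill this gap.
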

\begin{proof}Let $q\in\mathbb N$.
By construction, for all $s>q$ and all $B\in\Gamma_q(a,b)$ we have 
\[\nu_{s}(B)=\frac{\#\{C\in\Gamma_{s}(a,b)\colon C\subset B\}}{\#\Gamma_{s}(a,b)}=\frac{1 }{\#\Gamma_{q}(a,b)}.\]
Since $\nu(\partial B)=0$,
letting $s\to\infty$ yields the desired equality.\end{proof}

Let $x\in\pi^{-1}(\widetilde\Lambda_k(a,b))\cap(0,1)$.
For each sufficiently small $r>0$, take $q\in\mathbb N$ with
\[K_N\exp(-(\chi_\nu(T)+2\varepsilon)k(q+1))<r\leq K_N\exp(-(\chi_\nu(T)+2\varepsilon)kq).\]
By lemma~\ref{diam-lem}, the interval $[x-r,x+r]$ in $[0,1]$ intersects at most three elements of $\Gamma_q(a,b)$.
Lemma~\ref{final-e} gives
\[\nu([x-r,x+r])\leq \frac{3}{\#\Gamma_q(a,b)}.\]
Proposition~\ref{nov***}(a) gives $\#\Gamma_q(a,b)=\#\widetilde\Delta_q(a,b)=(\#\mathcal Q_k)^q\geq \exp((h_\nu(T)-\varepsilon)kq)$.
Plugging this inequality into the previous one, taking logarithms, dividing by $\log r$ and rearranging the result yields
\[\liminf_{r\to0}\frac{\log\nu([x-r,x+r])}{\log r}\geq\frac{h_{\nu}(T)-\varepsilon}{\chi_\nu(T)+2\varepsilon}.\]
Since $x$ is an arbitrary point of $\pi^{-1}(\widetilde\Lambda_k(a,b))\cap(0,1)$, 
the mass distribution principle shows
\[\dim_{\rm H}R_{T,\xi_T}(a,b)\geq\dim_{\rm H}\pi^{-1}(\widetilde\Lambda_k(a,b))\geq\frac{h_{\nu}(T)-\varepsilon}{\chi_\nu(T)+2\varepsilon}.\]
Decreasing $\varepsilon$ to $0$ and then using \eqref{rok}, we obtain the desired equality in theorem~\ref{monotone-thm}.  
The proof of theorem~\ref{monotone-thm} is complete. \qed

\appendix
\def\thesection{\Alph{section}}

\section{}

\subsection{Proof of lemma~\ref{computational}}\label{compute-s}
 First we define 
 $(\ell_{p})_{p=1}^\infty$ or $(\ell_{p})_{p=0}^\infty$ inductively, and then define
 $(\gamma_p)_{p=1}^\infty$ depending on $(a,b)$ as follows:

 \medskip
\noindent{\bf Case I:}  $0<a\leq b<\infty$. 
 \[\ell_1=b^{-2}+1,\  
  \ell_{p+1}=\begin{cases}(b/a)(
  \ell_p+\sqrt{\ell_p})&\text{if $p$ is even},\\
\ell_p+\sqrt{\ell_p}&\text{if $p$ is odd,}
 \end{cases}\]
 \[\gamma_p=\begin{cases}
 b& \text{ if $p$ is even,}\\
 a& \text{ if $p$ is odd};
 \end{cases}\]

\noindent {\bf Case II:} $0=a<b<\infty$.
  \[\ell_0=\ell_1=b^{-2}+1,\ 
  \ell_{p+1}=\begin{cases}b
  \sqrt{\ell_p}(\ell_p+\sqrt{\ell_p})&\text{if $p$ is even},\\
 \ell_p+\sqrt{\ell_p}&\text{if $p$ is odd,}
 \end{cases}\]
 
 \[\gamma_{p}=\begin{cases}
 b&\text{ if $p$ is even,}\\
 1/\sqrt{\ell_{p-1}}&\text{ if $p$ is odd};
 \end{cases}\]

\noindent{\bf Case III:}   $0<a<b=\infty$. 
 \[\ell_1=a+1,\ \ell_{p+1}=\begin{cases}(\ell_p/a)(
  \ell_p+\sqrt{\ell_p})&\text{if $p$ is even},\\
 \ell_p+\sqrt{\ell_p}&\text{if $p$ is odd,}
 \end{cases}\]
 \[
 \gamma_p=\begin{cases}
 \ell_p&\text{ if $p$ is even,}\\
 a&\text{ if $p$ is odd;}
 \end{cases}\]

\noindent{\bf Case IV:} 
 $a=b=0$.
\[\ell_1=1,\ 
  \ell_{p+1}=
 \ell_p+\sqrt{\ell_p}\ \text{ and }\ \gamma_{p}=1/\sqrt{\ell_p}\ \text{ for $p\in\mathbb N$};
 \]

 \noindent{\bf Case V:} 
 $a=b=\infty$. 
 \[\ell_1=1,\ \ell_{p+1}=
 \ell_p+\sqrt{\ell_p}\ \text{ and } \ \gamma_{p}=\ell_p\ \text{ for $p\in\mathbb N$;}
 \]

\noindent{\bf Case VI:} 
$0=a<b=\infty$.
 \[\ell_0=\ell_1=1,\ \ell_{p+1}=\begin{cases}\ell_p\sqrt{\ell_p} (
  \ell_p+\sqrt{\ell_p})&\text{if $p$ is even},\\
 \ell_p+\sqrt{\ell_p}&\text{if $p$ is odd,}
 \end{cases}\]
 \[\gamma_{p}=\begin{cases}
 \ell_p&\text{ if $p$ is even,}\\
 1/\sqrt{\ell_{p-1}}&\text{ if $p$ is odd.}
 \end{cases}\]
It is immediate to check (a) (b) (c) (d) 
 in all these cases.

 To prove (e)
we treat the six cases one by one.
\medskip

\noindent{\bf Case I:}  $0<a\leq b<\infty$. 
By definition, for all $p\in\mathbb N$ we have
\begin{equation}\label{p-ineq}\ell_p\geq \ell_1>b^{-2}.\end{equation}
If $p$ is even then \eqref{p-ineq} gives
$\gamma_p \ell_p+1=b \ell_p+1<b(\ell_p+\sqrt{\ell_p})=\gamma_{p+1}\ell_{p+1}$.
If $p$ is odd then 
 \eqref{p-ineq} gives $\gamma_p \ell_p+1=a \ell_p+1<b(\ell_p+\sqrt{\ell_p})=\gamma_{p+1}\ell_{p+1}.$

\medskip

\noindent {\bf Case II:} $0=a<b<\infty$.
By definition, for all $p\in\mathbb N$ we have
\begin{equation}\label{p-ineq'}\ell_p\geq \ell_1>b^{-2}.\end{equation}
   If $p$ is even then $\ell_p>b^{-2}$ in \eqref{p-ineq'} gives 
$\gamma_p\ell_p+1=b \ell_p+1<b(\ell_p+\sqrt{\ell_p})=\gamma_{p+1}\ell_{p+1}$.
    If $p$ is odd then 
     \eqref{p-ineq'} gives 
  $\gamma_p\ell_p+1=(1/\sqrt{\ell_{p-1}}) \ell_p+1\leq b \ell_p+1<b(\ell_p+\sqrt{\ell_p})=\gamma_{p+1}\ell_{p+1}$.

\medskip

\noindent{\bf Case III:}   $0<a<b=\infty$. 
By definition, for all $p\in\mathbb N$ we have
\begin{equation}\label{p-ineq''}\ell_p\geq \ell_1>\max\{1,a\}.\end{equation}
If $p$ is even then $\ell_p>1$ in \eqref{p-ineq''} gives 
$\gamma_p \ell_p+1=\ell_p^2+1<\ell_p(\ell_p+\sqrt{\ell_p})=\gamma_{p+1}\ell_{p+1}$.
  If $p$ is odd then \eqref{p-ineq''} gives 
  $\gamma_p\ell_p+1=a \ell_p+1< \ell_p^2+1<\ell_{p+1}(\ell_p+\sqrt{\ell_p})=\gamma_{p+1}\ell_{p+1}$.

  \medskip

\noindent{\bf Case IV}:
 $a=b=0$.  By definition, for all $p\in\mathbb N$ we have
\begin{equation}\label{eq-IV}\ell_p\geq\ell_1\geq1.\end{equation} For all $p$, \eqref{eq-IV} gives 
$\gamma_p\ell_p+1=\sqrt{\ell_p}+1< \sqrt{\ell_p+\sqrt{\ell_p}}<\gamma_{p+1}\ell_{p+1}$.

\medskip

 \noindent{\bf Case V:} 
 $a=b=\infty$.  By definition, for all $p\in\mathbb N$ we have
\begin{equation}\label{eq-V}\ell_p\geq\ell_1\geq1.\end{equation} For all $p\geq1$, \eqref{eq-V} gives 
$\gamma_p \ell_p+1=\ell_p^2+1<(\ell_{p}+\sqrt{\ell_p})^2=\gamma_{p+1}\ell_{p+1}$.

\medskip

  \noindent{\bf Case VI:} 
$0=a<b=\infty$. By definition, for all $p\in\mathbb N$ we have
\begin{equation}\label{ell-1}\ell_p\geq\ell_1\geq1.\end{equation}
If $p$ is even then \eqref{ell-1} gives $\gamma_p\ell_p+1=\ell_p^2+1<\ell_p(\ell_p+\sqrt{\ell_p})=\gamma_{p+1}\ell_{p+1}.$
   If $p$ is odd then \eqref{ell-1} gives $\gamma_p\ell_p+1=\ell_p/\sqrt{\ell_{p-1}}+1< \ell_{p}(\ell_p+\sqrt{\ell_p})=
\gamma_{p+1}\ell_{p+1}.$
\medskip

To prove (f),
set $\ell_0=\gamma_0=0$ for convenience. Let $c\geq0$. By (c) and (e), for each $j\in\mathbb N$ there is $L_j>0$ such that $\lim_{j\to\infty}L_j=\infty$ and
$L_j(c+\ell_j)\leq e^{\gamma_{j-1}\ell_{j-1}}(e-1)\leq e^{\gamma_j\ell_j}-e^{\gamma_{j-1}\ell_{j-1}}.$
Summing this inequality over all $1\leq j\leq p$ and rearranging the result implies the desired equality.
The proof of lemma~\ref{computational} is complete. \qed

\subsection{Transitivity of the alpha-beta transformations}\label{transitivity} 
The next proposition is a supplement to corollary~\ref{cor1}.
\begin{prop}\label{trans-prop}If $0\leq\alpha<1$ and $\beta\geq2$, then $T_{\alpha,\beta}$ is transitive.\end{prop}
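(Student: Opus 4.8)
The plan is to prove the stronger \emph{locally eventually onto} (leo) property: for every nonempty open interval $J\subseteq[0,1)$ there is $n\in\mathbb N$ with $T_{\alpha,\beta}^n(J)\supseteq(0,1)$. Transitivity then follows by a standard Birkhoff-type construction. Fixing a countable base $\{V_k\}$ of $[0,1)$ and using leo together with the piecewise monotonicity of $T_{\alpha,\beta}$, one builds a nested sequence of closed intervals $K_1\supseteq K_2\supseteq\cdots$ with $\overline{K_{k+1}}\subseteq\mathrm{int}(K_k)$ and $\overline{T_{\alpha,\beta}^{m_k}(K_k)}\subseteq V_k$ for a strictly increasing sequence $(m_k)$: given $K_{k-1}$, leo makes $T_{\alpha,\beta}^{m_k}(K_{k-1})\supseteq(0,1)$ meet $V_k$, and one pulls a small subinterval of $V_k$ back along a monotone branch of $T_{\alpha,\beta}^{m_k}$ inside $K_{k-1}$. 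Any point of $\bigcap_k K_k$ then has a dense forward orbit, which is exactly transitivity in the sense used in this paper.

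Now I set up the structure. From $m-1-\alpha<\beta\le m-\alpha$ we have $m=\lceil\beta+\alpha\rceil\ge2$, since $\beta+\alpha\ge2$. A direct computation with the explicit branches gives $T_{\alpha,\beta}(I_j)=[0,1)$ for every \emph{interior} index $1\le j\le m-2$, while the two extreme branches satisfy $T_{\alpha,\beta}(I_0)=[\alpha,1)$ and $T_{\alpha,\beta}(I_{m-1})=[0,\gamma)$ with $\gamma=\beta+\alpha-(m-1)\in(0,1]$. Since consecutive break points $(j-\alpha)/\beta$ are at distance at most $1/\beta$, any interval of length exceeding $2/\beta$ contains a complete branch $I_j$, and, when $m\ge3$, any interval of length exceeding $3/\beta$ contains two consecutive complete branches and hence at least one interior branch $I_{j_0}$. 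The decisive observation is that if some monotone image $T_{\alpha,\beta}^n(W)$, with $W\subseteq J$ an interval and $T_{\alpha,\beta}^n|_W$ monotone, contains $\mathrm{int}(I_{j_0})$ for an interior $j_0$, then $T_{\alpha,\beta}^{n+1}(W)\supseteq T_{\alpha,\beta}(\mathrm{int}(I_{j_0}))=(0,1)$, giving leo at once.

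It therefore remains to produce, from an arbitrary $J$, a monotone image of length larger than $3/\beta$. Starting from a subinterval $W_0\subseteq J$ contained in a single branch (so that $T_{\alpha,\beta}(W_0)$ is an interval of length $\beta|W_0|$), I iterate while always passing to a monotone sub-piece of the current image. As long as the current monotone image contains no complete branch it crosses at most one break point, so one of its at most two sub-pieces has length at least $\beta/2\ge1$ times the current length; in particular, whenever the current image lies in a single branch it expands by the full factor $\beta\ge2$. Thus the lengths are nondecreasing and grow strictly until a complete branch is engulfed. If the engulfed branch is interior we are done; if it is $I_0$ or $I_{m-1}$ we instead follow that branch, replacing the current interval by the single interval $[\alpha,1)$ or $[0,\gamma)$ of definite positive length, and restart the expansion. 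The main obstacle is to rule out that this process stalls, or cycles indefinitely between the two extreme branches, without ever reaching length $3/\beta$. For $\beta>2$ the expansion factor at each break-point crossing is $\beta/2>1$, so the longest monotone image cannot remain bounded; consequently a complete interior branch is engulfed after finitely many steps. The borderline $\beta=2$ splits into the doubling map ($\alpha=0$), which is exact and hence transitive, and the case $\alpha\in(0,1)$, where $m=3$ and the single interior full branch $I_1$ of length $1/2$ can be reached by a short direct analysis of the three-branch dynamics.

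Combining these steps yields leo for every $(\alpha,\beta)$ with $0\le\alpha<1$ and $\beta\ge2$, and hence transitivity. The heart of the argument, and the step I expect to be the most delicate, is the growth mechanism above: precisely, the control of how monotone images split at break points and the exclusion of non-strict expansion at the borderline value $\beta=2$. Everything else is either a direct computation with the explicit affine branches of $T_{\alpha,\beta}$ or the routine passage from leo to a dense orbit.
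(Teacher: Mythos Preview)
Your proposal is correct and follows essentially the same route as the paper: both prove the locally eventually onto property by iteratively expanding a monotone sub-image (gaining a factor of at least $\beta/2$ at each break-point crossing), use strict growth for $\beta>2$ to force the image to swallow an interior full branch, and treat $\beta=2$ by a separate finite case analysis of the three-branch map. The only cosmetic differences are that the paper works directly with the open-set characterization of transitivity rather than passing through a Birkhoff nested-interval construction, and that the paper's algorithm simply always keeps the larger half after a split (avoiding your ``restart at an extreme branch'' step), which makes the $\beta>2$ growth estimate slightly cleaner; conversely, you explicitly isolate the $\alpha=0$ doubling-map case at $\beta=2$, which the paper's phrasing (``$S$ consists of two elements'') tacitly skips.
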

\begin{proof}
Write $T$ for $T_{\alpha,\beta}$.
 Although $T$ is not a homeomorphism, it suffices to show that for any pair $(U,V)$ of non-empty open subsets of $[0,1]$, there is $n\in\mathbb N$ such that $U\cap T^{-n}(V)\neq\emptyset$.

Let $S$ denote the set of discontinuities of $T$. 
Let $U$, $V$ be non-empty open subsets of $[0,1]$.
We construct non-empty open subintervals $A_0,A_1,\ldots$ of $(0,1)$ inductively as follows.
Let $A_0$ be an arbitrary open interval that is contained in $U$ and does not contain two elements of 
$S$. 
Suppose we are given $A_n$ for some $n\geq0$ 
that does not contain two elements of $S$.
If $A_n$ is disjoint from $S$,
then we set $A_{n+1}=T(A_n)$. Otherwise, 
we split $A_n\setminus S=J\bigsqcup K$ so that ${\rm diam}(J)>{\rm diam}(K)$, or else ${\rm diam}(J)={\rm diam}(K)$ and $\inf K\geq\sup J$, and set $A_{n+1}=T(K)$.
Clearly,
if $A_n$ and $A_{n+1}$ are defined then 
${\rm diam}(A_{n+1})\geq(\beta/2){\rm diam}(A_n)$.
If moreover $A_{n+1}=T(A_n)$ then ${\rm diam}(A_{n+1})=\beta\cdot {\rm diam}(T(A_n))$.

If $\beta>2$, then ${\rm diam}(A_n)$ keeps growing exponentially as $n$ increases. So,
we must reach $n\geq1$ such that $A_0,\ldots,A_n$ are defined and
$A_n$ contains two elements of $S$.
Then $A_n$ contains $I_j$ for some $1\leq j\leq m-2.$
It follows that $A_0$ contains an open interval that is mapped by  $T^{n+1}$ homeomorphically onto $(0,1)$. We are done.

Suppose $\beta=2$. Then $S$ consists of two elements. If $n\geq1$, $A_0,\ldots,A_n$ are defined and $A_n\supset S$, then $A_n$ contains $I_1$. It follows that $A_0$ contains an open interval that is mapped by $T^{n+1}$ homeomorphically onto $(0,1)$.
Therefore,
it is left to consider the case where $A_n$ is defined for all $n\geq0$.
Since $\beta>1$, $\{n\geq0\colon A_{n+1}=T(A_n)\}$ is a finite set.
Since $S=\{(1-\alpha)/2,(2-\alpha)/2\}$,
$T(I_0)=[\alpha,1)$, $T(I_2)=[0,\alpha)$,
and $1-\alpha$ is the unique fixed point of $T_{\alpha,2}$,
 there is $n\geq0$ such that one of the following three cases occurs:

\begin{itemize}
\item[(i)] $A_n=T(I_0)$ and $A_{n+1}=T(I_2)$;

\item[(ii)] $A_n=T(I_2)$ and $A_{n+1}=T(I_0)$;

\item[(iii)] $1-\alpha\in A_n$.
\end{itemize}
Since $T(I_0)\cup T(I_2)=[0,1)$, 
if (i) or (ii) occurs then we are done.
If (iii) occurs, then there is $n'\geq n+1$ such that $A_0$ contains an open interval that is mapped by $T^{n'}$ diffeomorphically onto $(0,1)$. 
The proof of proposition~\ref{trans-prop} is complete.
\end{proof}

\subsection*{Acknowledgments} 
I thank anonymous referees and the editor for their careful readings of the manuscript and giving valuable suggestions.
Part of this paper was written while I was staying in University of Maryland. I thank James A. Yorke for his hospitality during this visit.
This research was partially supported by the JSPS KAKENHI 25K21999.

      \bibliographystyle{amsplain}

\end{document}